\newtheorem{theorem}{Theorem}[section]
\newtheorem{lemma}[theorem]{Lemma}
\newtheorem{proposition}[theorem]{Proposition}
\theoremstyle{definition}
\newtheorem{definition}[theorem]{Definition}
\newtheorem{remark}[theorem]{Remark}
\DeclareMathOperator{\tr}{tr}
\newcommand{\Z}{\mathbb{Z}}
\newcommand{\R}{\mathbb{R}}
\newcommand{\T}{\mathbb{T}}
\let\Re=\undefined\DeclareMathOperator*{\Re}{Re}
\newcommand{\qtq}[1]{\quad\text{#1}\quad}
\let\Re=\undefined\DeclareMathOperator{\Re}{Re}
\numberwithin{equation}{section}
\numberwithin{theorem}{section}
\begin{document}

\title[Global well-posedness for mKdV in modulation spaces]{Global well-posedness and equicontinuity\\for mKdV in modulation spaces}

\author{Saikatul Haque}
\address{
Department of Mathematics\\
University of California\\Los Angeles\\CA 90095\\USA\\
\& Harish-Chandra Research Institute, Allahabad 211019, India}
\email{saikatul@math.ucla.edu}

\author{Rowan Killip}
\address{
Department of Mathematics\\
University of California\\Los Angeles\\CA 90095\\USA}
\email{killip@math.ucla.edu}

\author{Monica Vi\c{s}an}
\address{
Department of Mathematics\\
University of California\\Los Angeles\\CA 90095\\USA}
\email{visan@math.ucla.edu}

\author{Yunfeng Zhang}
\address{Department of Mathematical Sciences \\
University of Cincinnati, OH 45221, USA}
\email{zhang8y7@ucmail.uc.edu}

\begin{abstract}
We establish global well-posedness for both the defocusing and focusing complex-valued modified Korteweg--de Vries equations on the real line in modulation spaces $M_p^{s,2}(\mathbb{R})$, for all $1\leq p<\infty$ and $0\leq s<3/2-1/p$.  We will also show that such solutions admit global-in-time bounds in these spaces and that equicontinuous sets of initial data lead to equicontinuous ensembles of orbits.  Indeed, such information forms a crucial part of our well-posedness argument.
% Our arguments rely on two important ingredients: (1) the demonstration of a priori bounds in these spaces for Schwartz solutions to both mKdV and NLS; and (2) a proof that the set of orbits emanating from a bounded and equicontinuous set in $M_p^{s,2}(\mathbb{R})$ is also bounded and equicontinuous in $M_p^{s,2}(\mathbb{R})$.
\end{abstract}

%\date{\today}  % delete this later

\maketitle

%\tableofcontents

\section{Introduction}

In this paper we study the complex-valued modified Korteweg--de Vries equations
\begin{equation}\tag{mKdV}\label{mkdv}
    \partial_t u+\partial_x^3u=\pm6|u|^2\partial_xu,
\end{equation}
which will also lead us to consider the cubic nonlinear Schr\"odinger equations
\begin{equation}\tag{NLS}\label{nls}
i\partial_t u+\partial_x^2u=\pm 2|u|^2u.
\end{equation}
Here, the $+$ signs correspond to the defocusing models, while the focusing models have $-$ signs.  We are interested in studying these equations for initial data in the modulation spaces $M^{s,2}_p(\mathbb{R})$ with $1\leq p<\infty$ and $s\geq 0$. These spaces are defined as the completion of Schwartz functions with respect to the norms
\begin{equation}\label{E:mod}
\|f\|_{M^{s,2}_p}:= \bigl\| \langle k\rangle^s \|\widehat f\|_{L^2(I_k)}\bigr\|_{\ell^p_k(\Z)}.
\end{equation}
Here $\widehat f $ denotes the Fourier transform of $f$ as defined by \eqref{FT} and for each $k\in \Z$, $I_k$ denotes the interval $ [k-\frac12, k+\frac12)$.

Modulation spaces were first introduced by Feichtinger in \cite{Feichtinger83}, already in the full three-parameter generality presented in \eqref{E:Mrsp}. For a thorough introduction to this family of spaces, see \cite{Grochenig01, WHHG11}. 

%The study of dispersive equations in modulation spaces has received a great deal of attention; see \cite{WZG06, WHuang07, WH07, BO09, WHHG11, BR16, CHKP17, SGuo17, CHKP19, Pattakos19, CG20, OW20, OW21, Schippa22, Schippa23, Kla23}. These spaces have proved to be a useful alternative to the Sobolev spaces $H^s$, particularly for problems with random initial data such as that corresponding to the Gibbs state (\cite{,,}).  One of the virtues of the modulation spaces in this setting is that they more faithfully capture the distribution of norm across Fourier modes. 

The goal of this article is to establish both a priori bounds and global well-posedness for \eqref{mkdv} in modulation spaces.  Our method applies equally well to \eqref{nls} and so we will present results for this model as well.   We begin with the question of a priori bounds:

\begin{theorem}\label{T:main}
Suppose $1\leq p<\infty$ and $0\leq s<\frac32-\frac1p$. Let $u_0$ be a Schwartz function and let $u(t)$ denote the solution to either \eqref{mkdv} or \eqref{nls} with initial data $u(0)=u_0$.  Then $u$ satisfies the following a priori bounds:
\begin{align}\label{apriori bds}
\sup_{t\in\R}\, \|u(t)\|_{M^{s,2}_p}\lesssim \bigl(1+\|u_0\|_{M^{s,2}_p}\bigr)^{c(s,p)}\|u_0\|_{M^{s,2}_p}
\end{align}
where 
$$
c(s,p)=\begin{cases}
ps+\frac p2-1&\text{ if }\ 2\leq p<\infty,\\[2mm]
2s+ \frac2p-1&\text{ if }\ 1\leq p\leq 2.
\end{cases}
$$
Moreover, if $Q\subset \mathcal S(\mathbb\R)$ is bounded and equicontinuous in $M^{s,2}_p(\R)$, then the union of all orbits emanating from $Q$ under the \eqref{mkdv} or \eqref{nls} flow forms a bounded and equicontinuous set in $M^{s,2}_p(\R)$.
\end{theorem}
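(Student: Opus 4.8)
The union of all orbits is bounded in $M^{s,2}_p$ directly from \eqref{apriori bds}: since $Q$ is bounded, the right-hand side is uniformly bounded over $Q$ and all $t\in\R$. Thus the substance of the claim is the propagation of equicontinuity. Writing $a_k(f)=\|\widehat f\|_{L^2(I_k)}$, I will use the characterization that a bounded family is equicontinuous in $M^{s,2}_p$ exactly when its high-frequency tails are uniformly small; concretely, the goal is to show
\begin{equation*}
\lim_{N\to\infty}\ \sup_{u_0\in Q}\ \sup_{t\in\R}\ \bigl\|\langle k\rangle^s a_k\bigl(u(t)\bigr)\bigr\|_{\ell^p(|k|>N)}=0 .
\end{equation*}
Equivalently, I must dominate the high-frequency tail of $u(t)$, uniformly in time, by a quantity that is small whenever the corresponding tail of $u_0$ is small.

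The engine is the complete integrability shared by \eqref{mkdv} and \eqref{nls} through their common AKNS/Lax structure. The plan is to exploit the conserved perturbation determinant --- equivalently the logarithm $\alpha(\kappa;u)$ of the transmission coefficient at spectral parameter $\kappa$ --- which is conserved under both evolutions \emph{for each fixed} $\kappa$. First I would prove a coercivity estimate tying $\alpha(\kappa;u)$ to the frequency-localized $L^2$ mass: for $\kappa$ large and comparable to $|k|$, the quadratic part of the expansion of $\alpha$ reproduces $a_k(u)^2$, while the higher-order (genuinely nonlinear) terms are controlled by the a priori bound \eqref{apriori bds} on $\|u\|_{M^{s,2}_p}$. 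I expect the constraint $s<\tfrac32-\tfrac1p$ to surface precisely here, as the threshold at which the multilinear estimates needed to absorb these nonlinear remainders close.

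Granting such an estimate, equicontinuity propagates by conservation alone: since $\alpha(\kappa;u(t))=\alpha(\kappa;u_0)$ for every $\kappa$, the large-$\kappa$ behavior of $\alpha$, and hence the high-frequency mass $a_k(u(t))$, is controlled for all $t$ by the high-frequency content of $u_0$, which is uniformly small over the equicontinuous set $Q$. The remaining step is to reassemble these frequency-localized bounds into the weighted $\ell^p$ tail norm, inserting the factors $\langle k\rangle^{sp}$, summing over $|k|>N$, and again invoking \eqref{apriori bds} to absorb cross terms. In the focusing case the transmission coefficient may vanish at discrete eigenvalues, but these lie in a bounded spectral region; since the tail bound is driven entirely by the large-$\kappa$ regime, this discrete spectrum does not interfere.

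The principal obstacle I anticipate is the mismatch between the Hilbertian, quadratic nature of the conserved quantities and the $\ell^p$ geometry of $M^{s,2}_p$ when $p\neq 2$: the conserved $\alpha(\kappa;u)$ naturally governs $L^2$-based tails, whereas the statement demands $\ell^p$ tails. Bridging this gap --- plausibly through a block-by-block comparison together with an interpolation between the $\alpha$-generated control and the global bound \eqref{apriori bds} --- while keeping the multilinear error estimates uniform across the full admissible range of $(s,p)$, is where I expect the bulk of the technical effort to reside.
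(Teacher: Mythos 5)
There is a genuine gap, in two parts. First, your proposal does not engage with the first assertion of the theorem: the a priori bound \eqref{apriori bds} is itself the main content and cannot be quoted as an input, yet you invoke it both to get boundedness of the orbits and to ``absorb the nonlinear remainders.'' The paper's proof establishes \eqref{apriori bds} from scratch: after rescaling to small data (Lemmas~\ref{L:M controls HS} and~\ref{L:scaling}), it shows the modulation norm is comparable to $\bigl\|\langle k\rangle^s\sqrt{\beta_2(\tfrac12;u^k(t))}\bigr\|_{\ell^p_k}$, where $\beta=\alpha(\varkappa;\cdot)-\tfrac12\alpha(2\varkappa;\cdot)$ and $u^k$ is the Galilei boost of $u$ by wave number $k$, then controls the quartic term via the exact formula of Lemma~\ref{L:quartic} and the sextic-and-higher tail via the Hilbert--Schmidt bound, and closes with a bootstrap.

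Second, and more structurally, your localization mechanism does not work. You propose to capture $a_k(u)=\|\widehat u\|_{L^2(I_k)}$ by taking the spectral parameter $\kappa$ large and comparable to $|k|$; but $\alpha_2(\varkappa;u)=2\varkappa\int|\widehat u(\xi)|^2(4\varkappa^2+\xi^2)^{-1}\,d\xi$ with $\varkappa\sim|k|$ measures the total mass at frequencies $|\xi|\lesssim\varkappa$, not the mass in the unit window $I_k$, and no choice or differencing in $\kappa$ alone produces unit-scale localization centered at a large frequency $k$. The device that does this is the Galilei boost $u\mapsto u^k$, which recenters the symbol at $\xi=k$ and is used with a \emph{fixed} $\varkappa=\tfrac12$; your outline omits it entirely. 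Relatedly, the paper points out that the ``conservation for every $\kappa$, look at large $\kappa$'' route to equicontinuity, which works in $H^s(\R)$, is incompatible with how modulation norms are extracted from the perturbation determinant; instead, equicontinuity is propagated by constructing a slowly growing weight $c_k$ adapted to $Q$ (Proposition~\ref{P:equi}) and re-running the entire weighted a priori bound. Finally, reaching the full range $s<\tfrac32-\tfrac1p$ requires both the differenced quantity $\beta$ (to upgrade the quadratic weight from $\langle\xi-k\rangle^{-2}$ to $\langle\xi-k\rangle^{-4}$) and the exact evaluation of the quartic term; the crude remainder estimates you envisage only close for $0\leq s<1-\tfrac1p$, the range already treated by Oh--Wang.
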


Theorem~\ref{T:main} expands the parameter range for a priori bounds from the earlier papers \cite{Kla23,OW20} that we will discuss more fully in subsection~\ref{SS:prior}.  The inclusion of equicontinuity is new in all cases.  The proper interpretation of this notion in the modulation spaces $M^{s,2}_p(\R)$ is presented in Definition~\ref{D:equi}.

It has long been appreciated that conservation laws play a central role in transferring \emph{local} well-posedness to \emph{global} well-posedness because they provide a priori bounds for solutions. We will see  instances of this when we review the history of well-posedness for \eqref{nls} and \eqref{mkdv} in modulation spaces.

The main theme of this paper is that the additional step of propagating \emph{equicontinuity} through time (rather than just boundedness) has an even more profound effect: it allows one to transfer well-posedness from one class of spaces to another!  Specifically, using earlier results of \cite{HKV}, we will obtain the following well-posedness results in modulation spaces:

\begin{theorem}\label{T:Main}
Suppose $1\leq p<\infty$ and $0\leq s<\frac32-\frac1p$. Then both \eqref{nls} and \eqref{mkdv} are globally well-posed in $M^{s,2}_p(\mathbb{R})$ in the following sense: The solution map extends uniquely from Schwartz space to a jointly continuous map $\Phi: \R\times M^{s,2}_p(\mathbb{R})\to M^{s,2}_p(\mathbb{R})$. Moreover, for each initial data $u_0\in M_p^{s,2}(\mathbb{R})$, the orbit $\{\Phi(t,u_0):t\in\mathbb{R}\}$ is uniformly bounded and equicontinuous in $M_p^{s,2}(\mathbb{R})$. 
\end{theorem}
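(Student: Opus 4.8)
The plan is to derive Theorem~\ref{T:Main} from Theorem~\ref{T:main} by combining the a priori bounds and equicontinuity with a density-plus-compactness argument, leveraging the prior results of~\cite{HKV}. The overarching strategy is that equicontinuity upgrades weak-type convergence to strong convergence in $M^{s,2}_p$, which is exactly what one needs to pass to limits in the solution map.

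\medskip

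\noindent\textbf{Step 1: The solution map on Schwartz data is well-defined and locally Lipschitz on equicontinuous sets.} First I would recall from~\cite{HKV} that \eqref{nls} and \eqref{mkdv} are globally well-posed in the $H^s$-type / Fourier--Lebesgue scale where their continuity of the data-to-solution map already holds. The task is then to transfer this to $M^{s,2}_p$. To that end, fix $u_0\in M^{s,2}_p$ and choose a sequence $u_0^{(n)}\in\mathcal S(\R)$ with $u_0^{(n)}\to u_0$ in $M^{s,2}_p$. Because convergent sequences are bounded and equicontinuous, the set $Q=\{u_0^{(n)}\}$ is bounded and equicontinuous in $M^{s,2}_p$; by Theorem~\ref{T:main}, the union of orbits $\{\Phi(t,u_0^{(n)}):t\in\R,\,n\in\NB\}$ is likewise bounded and equicontinuous.

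\medskip

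\noindent\textbf{Step 2: Extract a limiting orbit and identify it as a solution.} For each fixed $t$, the bounded equicontinuous set $\{\Phi(t,u_0^{(n)})\}_n$ is precompact in $M^{s,2}_p$ (this is the modulation-space analogue of Arzel\`a--Ascoli; equicontinuity plus boundedness gives precompactness in the norm topology, per Definition~\ref{D:equi}). Using a weaker topology in which the smooth solutions are known to converge continuously -- for instance convergence in a larger modulation space $M^{s',2}_p$ with $s'<s$, or in the sense of distributions via the $H^{s}$-flow from~\cite{HKV} -- I would show the limit $\Phi(t,u_0):=\lim_n\Phi(t,u_0^{(n)})$ exists, is independent of the approximating sequence, and that the convergence actually holds in the $M^{s,2}_p$ norm. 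The key mechanism is that equicontinuity forbids escape of $M^{s,2}_p$-mass to high frequencies, so weak-type or lower-regularity convergence plus uniform equicontinuity forces norm convergence.

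\medskip

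\noindent\textbf{Step 3: Joint continuity and uniform boundedness/equicontinuity of the orbit.} Finally I would verify that $\Phi:\R\times M^{s,2}_p\to M^{s,2}_p$ is jointly continuous. Continuity in $t$ for fixed $u_0$ follows from the smooth flow together with the uniform equicontinuity just established; continuity in $u_0$ follows from the same density argument applied to two nearby data, again using that equicontinuity of the combined family of orbits converts the lower-regularity continuous dependence into $M^{s,2}_p$-continuous dependence. The uniform boundedness and equicontinuity of each single orbit $\{\Phi(t,u_0):t\in\R\}$ are inherited directly from Theorem~\ref{T:main} by passing to the limit in~\eqref{apriori bds}, since the right-hand side depends only on $\|u_0\|_{M^{s,2}_p}$ and the equicontinuity modulus is stable under norm limits.

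\medskip

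\noindent\emph{The main obstacle} I anticipate is Step~2: producing a topology in which the Schwartz-level solutions are already known to converge (so that one can even speak of the limiting flow), and then rigorously arguing that equicontinuity promotes that convergence to genuine $M^{s,2}_p$-norm convergence and guarantees the limit solves the equation in the appropriate sense. This is precisely the point where the results of~\cite{HKV} must interlock with the equicontinuity propagation of Theorem~\ref{T:main}; the interpolation between the low-regularity well-posedness and the high-frequency equicontinuity control is the crux of the whole transfer argument.
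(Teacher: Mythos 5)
Your overall route is the same as the paper's: approximate $u_0\in M^{s,2}_p(\R)$ by Schwartz data, use the embedding $M^{s,2}_p(\R)\hookrightarrow H^\sigma(\R)$ for some $\sigma>-\tfrac12$ (Lemma~\ref{L:embedding}) together with the $H^\sigma$ well-posedness of \cite{HKV} to get strong convergence of the smooth solutions in $C_t([-T,T];H^\sigma)$, and then use the equicontinuity propagated by Theorem~\ref{T:main} to upgrade this to convergence in $C_t([-T,T];M^{s,2}_p)$; this upgrade is exactly the paper's Proposition~\ref{P:upgrade convg}, and the resulting Cauchy statement is Theorem~\ref{T:CMX}. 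The joint-continuity bookkeeping in your Step~3 likewise matches the paper's triangle-inequality argument.

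Two assertions in your write-up are false, however, and should be removed. First, boundedness plus equicontinuity in $M^{s,2}_p(\R)$ does \emph{not} imply precompactness: the translates $\{g(x-n)\}_{n\in\Z}$ of a fixed nonzero Schwartz function $g$ all have the same localized Fourier profile $\|\widehat g\|_{L^2(I_k)}$, hence form a bounded and equicontinuous set in the sense of Definition~\ref{D:equi}, yet no subsequence is Cauchy in $M^{s,2}_p$. Remark~\ref{R:equi} gives only the converse implication; compactness would additionally require tightness in physical space, which nothing in the setup provides. For the same reason, merely \emph{weak} convergence plus equicontinuity does not force norm convergence: what is needed, and what \cite{HKV} actually supplies, is \emph{strong} convergence in the weaker norm $H^\sigma$. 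Since you also state this correct mechanism and it alone carries the argument, the defect is repairable by deleting the Arzel\`a--Ascoli detour and the reference to ``weak-type'' convergence. Finally, the phrase ``locally Lipschitz'' in the heading of Step~1 is an overclaim: for \eqref{mkdv} with $s<\tfrac14$ the data-to-solution map is not even uniformly continuous on bounded sets in the focusing case, so no quantitative modulus of continuity is available, and none is needed for the limiting argument.
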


In the \eqref{nls} setting, this provides a new approach to well-posedness in a regime that is accessible by prior methods; see subsection~\ref{SS:prior}.  By contrast, previous authors have not been able to treat \eqref{mkdv} for $s\leq \frac14$.  There is a reason for that: in this regime, the \eqref{mkdv} data-to-solution map is no longer real analytic.

For \eqref{mkdv}, real-valued initial data leads to real-valued solutions.  Indeed, in this case one may write the evolution equations in the form
\begin{equation}\tag{mKdV$_\R$}\label{mkdvR}
    \partial_t u+\partial_x^3u=\pm 2\partial_x (u^3).
\end{equation}
Much prior work has studied this real-valued problem by itself.  Evidently, our well-posedness result applies equally well to this special class of solutions.
On the other hand, recent work of Chapouto \cite{MR4251838} shows that the complex-valued case contains new instabilities beyond those of the real-valued case.

\subsection{Prior work}\label{SS:prior}  Historically, the Sobolev spaces $H^s(\R)=W^{s,2}(\R)=M^{s,2}_2(\R)$ have been the primary focus of research on both well-posedness and a priori bounds for dispersive equations, including \eqref{nls} and \eqref{mkdv}.

Global well-posedness of \eqref{nls} in $H^s(\R)$ for $s\geq 0$ was proved long ago by Tsutsumi \cite{MR915266}; the global nature of this result relies crucially on the exact conservation of the $L^2$ norm.

For \eqref{mkdvR}, local well-posedness in $H^s(\R)$ for $s\geq\frac14$ was proved in \cite{KPV93}.  This was made global in time in \cite{MR1969209,Guo,Kishimoto} through the construction of \emph{almost} conserved quantities.  It was only later in \cite{KVZ18,KT18} that exact conservation laws were constructed that provide uniform-in-time global bounds in $H^s(\R)$ spaces for such $s$.

The papers just mentioned obtained a very strong form of well-posedness: the solution is a real-analytic function of the initial data.  (This is a characteristic of contraction-mapping methods that transcends the complexity of the harmonic analysis tools employed.)   In this sense, the regularity thresholds presented are sharp: the data-to-solution map is no longer even uniformly continuous on bounded subsets of $H^s(\R)$ for lesser $s\in\R$; see \cite{CCT03,KPV01}.

We now know that both \eqref{nls} and \eqref{mkdv} are globally well-posed in $H^s(\R)$ for all $s>-\frac12$; this is the principal result of \cite{HKV}.   As reviewed in the appendix to that paper, this is sharp because there is instantaneous norm inflation for $s\leq-\tfrac12$.  To summarize, the well-posedness threshold for \eqref{nls} and \eqref{mkdv} in $H^s(\R)$ spaces is the scaling-critical index $s=-\tfrac12$, with ill-posedness at the borderline.

Modulation spaces have been a central character in the long-running quest to understand \eqref{nls} and \eqref{mkdv} near scaling criticality. As we will see, working in such spaces has led to some major successes, particularly, in the case of \eqref{nls}.  Before we proceed to this discussion, let us first define the full three-parameter family of modulation spaces.  To do so, we must fix a partition of unity in Fourier space.  To this end, let us choose $\widehat\phi\in C^\infty_c$ so that
\begin{equation*}%\label{E:part1}
\sum_{k\in\Z} \widehat{\phi_k}(\xi) \equiv 1 \qtq{where} \widehat{\phi_k}(\xi) := \widehat\phi(\xi-k) \qtq{or, equivalently,} \phi_k(x) := e^{ikx} \phi(x) .
\end{equation*}
The norm on $M^{s,r}_p$ is then defined via
\begin{equation}\label{E:Mrsp}
\| f \|_{M^{s,r}_p} = \bigl\| \langle k\rangle^s \| \phi_k * f\|_{L^r(\R)}\bigr\|_{\ell^p_k(\Z)}  .
\end{equation}
Different choices of the partition of unity lead to equivalent norms; indeed, when $1<r<\infty$, one may even choose $\widehat\phi$ to be the indicator of a unit interval as in \eqref{E:mod}.  Our arrangement of the indices in $M^{s,r}_p$ is designed to mimic that in $W^{s,r}$, $H^{s,r}$, and $B^{s,r}_p$; this differs from some of the references given.

While we will focus on modulation spaces in this paper, there is also a well-developed well-posedness theory in the allied setting of Fourier--Lebesgue spaces. 
In particular, we draw attention to \cite{MR2181058,MR2529909,MR2944731} and the references therein.  
%In addition to the real-line setting discussed in this paper, the well-posedness of \eqref{mkdv} and \eqref{nls} is much studied on the torus $\R/2\pi\Z$.  In this setting, there is exactly one frequency $\xi\in\Z$ per unit interval; correspondingly, we see that the natural analogue of ... is ... 

A great deal of effort has been expended in analyzing the well-posedness of nonlinear Schr\"odinger equations in the modulation spaces $M^{s,r}_p(\R)$; this includes \cite{BO09,BC20,BR16,CHKP17,CHKP19,SGuo17,Kla23,OW20,Pattakos19,Schippa22,Schippa23,WHuang07,WH07,WZG06}.  Given the diversity of methods and of parts of the parameter space that have been analyzed, we cannot give a simple cogent description of all this work.
With this in mind, we will focus mainly on the spaces $M^{s,2}_p(\R)$.
%Our primary goals are to explain why the analogue of Theorem~\ref{T:Main} is already known for \eqref{nls} and what these methods tell us about \eqref{mkdv}.
%One common attribute of all these analyses is the requirement $s\geq 0$.

We begin with local well-posedness results for \eqref{nls}. The paper \cite{WZG06} proved that \eqref{nls} is locally well-posed in  $M^{0,2}_1(\R)$.  This was expanded to $M^{s,r}_1$ with $1\leq r\leq\infty$ and $s\geq 0$ in \cite{BO09}.    These spaces are algebras.  Through the paper \cite{BR16}, we now know local well-posedness of \eqref{nls} in \emph{all} cases where $M^{s,r}_p(\R)$ is an algebra; this adds $M^{s,r}_p(\R)$ with $s>1/p'$.

In \cite{SGuo17}, Guo proved that \eqref{nls} is locally well-posed in $M^{0,2}_p$ for all $2\leq p < \infty$.  As $p\to\infty$, this result comes arbitrarily close to scaling; however, as discussed in \cite{SGuo17}, prior work shows that there is \emph{no} continuous extension of the data to solution map to the limiting case $M^{0,2}_\infty$. In \cite{BC20}, it is shown that \eqref{nls} is also ill-posed in $M^{s,2}_p$ when $s< -(\frac12 \wedge \frac1p)$.

The spaces $M^{s,2}_p$ in the range $1< p<2$ were first explored in \cite{Pattakos19} where local well-posedness and unconditional uniqueness were shown for $1\leq p<3/2$ and $s\geq 0$, as well as for $3/2<p\leq 2$ and $s\geq \tfrac32-\tfrac1p$.  Local well-posedness for $s=0$ and $1\le p<2$ was shown by contraction mapping in \cite{Kla23}; this was extended to global well-posedness in the same paper using conservation laws, as we discuss next.

The first development of conservation laws that control $M^{s,2}_p$ norms was by Oh--Wang in \cite{OW20}.  Concretely, they proved bounds of the type \eqref{apriori bds} for $2\le p<\infty$ and $0\leq s<1-\frac1p$.  Combining this with the local well-posedness results discussed earlier, they obtained global well-posedness of \eqref{nls} in these spaces.  Building on their ideas, \cite{Kla23} proved \eqref{apriori bds} for $1\le p<2$ with $s=0$, thereby obtaining global well-posedness for this range of parameters. 

Both \cite{Kla23,OW20} build on the earlier paper \cite{KVZ18}, which established a priori bounds for solutions to the Korteweg--de Vries equation, \eqref{mkdv}, and \eqref{nls}, in both $H^{s}(\R)$ and $B^{s,2}_p(\R)$ spaces.  The central object of this paper is the (logarithm of the) perturbation determinant associated with the Lax operator.  The treatment of \eqref{nls} and \eqref{mkdv} is unified by the fact that they have the same Lax operator; indeed, they belong to the same integrable hierarchy.

To prove their bounds for \eqref{nls} in modulation spaces, Oh--Wang combined this technology with the Galilei symmetry \eqref{Galilei nls} of \eqref{nls}.  The Galilei symmetry \eqref{Galilei mkdv} associated to \eqref{mkdv} is rather more complicated; nevertheless, Oh--Wang were able to prove \eqref{apriori bds} for \eqref{mkdv} over the same parameter range as for \eqref{nls}, namely,  $2\le p<\infty$ and $0\leq s<1-\frac1p$.  

We now turn to discussing the existing well-posedness theory for \eqref{mkdv} in modulation spaces.  The best prior results in this direction appeared in the contemporaneous papers \cite{CG20,OW21}. The paper \cite{CG20} shows local well-posedness in $M^{1/4,2}_p$ for $2\leq p \leq\infty$. The main result of \cite{OW21} is global well-posedness in $M^{s,2}_p$ for $s\geq \frac14$ and $2\leq p <\infty$; the authors also sketch proofs of global well-posedness for $s\geq \frac14$ and $1\leq p<2$, as well as for $s> \frac14$ and $p=\infty$.

The papers \cite{CG20,OW21} also prove mild ill-posedness for $s<\tfrac14$, specifically, that the data-to-solution map is not $C^3$ in the defocusing case and not uniformly continuous on bounded sets in the focusing case.  Thus, the regularity threshold for analytic well-posedness is $s=\frac14$ not only in $H^s(\R)$ spaces, as discussed earlier, but also in $M^{s,2}_p(\R)$, independendent of $p$.  This also highlights an important facet of our Theorem~\ref{T:Main}: we obtain well-posedness outside the realm of analytic well-posedness and correspondingly, outside the reach of contraction-mapping arguments.

\subsection{Methods}

Our proof of the a priori bounds \eqref{apriori bds} relies on the prior works \cite{KVZ18,Kla23,OW20} discussed earlier.  Following \cite{KVZ18}, we introduce the logarithm of the perturbation determinant $\alpha(\varkappa;u)$ directly as a power series in $u$; see \eqref{alpha def}. This series is shown to converge geometrically (for large values of the spectral parameter $\varkappa>0$) in Lemmas~\ref{L:HS bdd} and~\ref{L:M controls HS}.
In this way, it is natural to conflate $\alpha(\varkappa;u)$ with the leading term in the series, $\alpha_2(\varkappa;u)$, which is quadratic in $u$.

As we see from \eqref{alpha2}, this leading term captures the $L^2$ norm at low frequencies.  By combining this with the Galilei symmetry in a manner pioneered in \cite{OW20}, we deduce conservation of the $L^2$ norm centered around any frequency (modulo the higher order terms in the series).  Summing these terms and running a bootstrap argument to control the tail of the series leads directly to \eqref{apriori bds}, at least for $p$ large enough and $s$ small enough.

The weight $(4\varkappa^2+\xi^2)^{-1}$ appearing in \eqref{alpha2} decays too slowly to treat $p$ small and/or $s$ large by the method just outlined.  To overcome this obstacle, we employ a differencing technique from \cite{KVZ18}; see \eqref{beta defn}.  This was also essential in \cite{Kla23}.  As in that paper, we are also forced to evaluate the quartic term in the series exactly; the simple bound that follows from \eqref{AKNS I2} is not strong enough.

As noted earlier, the equicontinuity result of Theorem~\ref{T:main} is new in modulation spaces.  In prior works \cite{HKV,KV19} treating $H^s(\R)$ spaces, such equicontinuity claims have relied solely on the freedom of varying the spectral parameter in $\alpha(\varkappa;u)$.  This approach is incompatible with the manner in which the modulation norms are controlled through the perturbation determinant.

Given an $M^{s,2}_p$-equicontinuous set $Q$ of initial data, Proposition~\ref{P:equi} constructs a slowly growing weight so that $Q$ is a bounded set in the stronger modulation-type norm incorporating this extra weight.  We then show a priori bounds for this stronger norm in order to propagate equicontinuity.

The details of both the usual and additionally weighted a priori bounds in modulation spaces are presented in Section~\ref{a priori equi}.  It is there that we prove Theorem~\ref{T:main} by fully controlling the tail of the series (Lemmas~\ref{R6} and~\ref{L:beta4 small p}) and carrying out the bootstrap argument.

Section~\ref{well posed} is devoted to the proof of Theorem~\ref{T:Main}.  As we review in Lemma~\ref{L:embedding}, each of the modulation spaces covered by this theorem embeds into $H^\sigma(\R)$ for some $\sigma>-\frac12$.  In this way, we can guarantee the existence of solutions by appealing to the principal result of \cite{HKV}.  The principal question to be addressed is the $M^{s,2}_p\to C([-T,T];M^{s,2}_p)$ continuity of the resulting map.  This relies on two crucial ingredients: the equicontinuity proved in Theorem~\ref{T:main} and the $H^\sigma\to C([-T,T];H^\sigma)$ continuity proved in \cite{HKV}.  %Concretely, $M^{s,2}_p$ equicontinuity allows us to upgrate $H^{\sigma}$ convergence to $M^{s,2}_p$ convergence. % See Propostion~\ref{P:upgrade convg}

\subsection*{Acknowledgements} S.H. was supported by a Fulbright-Nehru Fellowship and a DST-INSPIRE grant (DST/INSPIRE/04/2022/001457); R.K. was supported by NSF grant DMS-2154022;  M.V. was supported by NSF grants DMS-2054194 and DMS-2348018; Y.Z. was supported by the National Key R\&D Program of China (No. 2022YFA1006700). S.H. and Y.Z. are grateful for the hospitality of UCLA and R.K. and M.V. are grateful for the hospitality of the Erwin Schr\"odinger International Institute for Mathematics and Physics during the program ``Nonlinear Waves and Relativity", where parts of this work were completed.   

\subsection{Notation} Our convention for the Fourier transform is
\begin{align}\label{FT}
\widehat f(\xi) = \tfrac{1}{\sqrt{2\pi}} \int_\R e^{-i\xi x} f(x)\,dx  \qquad \text{so} \qquad f(x) = \tfrac{1}{\sqrt{2\pi}} \int_\R e^{i\xi x} \widehat f(\xi)\,d\xi.
\end{align}
Correspondingly, $\widehat{fg} = (2\pi)^{-\frac12} \widehat f*\widehat g$.

Throughout this paper, we employ the standard notation $A \lesssim B$ to indicate that $A \leq CB$ for some constant $C > 0$.  If $A \lesssim B$ and $B \lesssim A$, we write $A \simeq B$.
Occasionally, we adjoin subscripts to this notation to indicate dependence of the constant $C$ on other parameters; for instance, we write $A \lesssim_{\delta} B$ when $A \leq
CB$ for some constant $C > 0$ depending on $\delta$.

We denote the Schwartz space by $\mathcal S(\R)$.  For $\sigma\in \R$, we write $H^\sigma(\R)$ for the Sobolev space defined as the completion of Schwartz space with respect to the~norm
$$
\|f\|_{H^\sigma} := \| \langle \xi\rangle^\sigma \widehat f\|_{L^2}.
$$
Here and throughout this paper, we use a slight modification of the usual bracket notation, namely,
$$\langle x\rangle := (4+|x|^2)^{\frac12}.$$
We make this modification so that $\log(\langle x\rangle)$ represents a natural logarithmically growing weight that does not vanish at the origin.

\section{Preliminaries}
\label{prelim}

\subsection{Modulation spaces and equicontinuity}

Essential to our proof of well-posedness for \eqref{mkdv} is the embedding of modulation spaces into the Sobolev spaces $H^\sigma(\R)$.  This allows us to exploit the definitive $H^s(\R)$ well-posedness result for \eqref{mkdv} from \cite{HKV}.

\begin{lemma}\label{L:embedding}
We have $M^{s,2}_p(\R)\hookrightarrow H^{\sigma}(\R)$ whenever
\begin{align}\label{262}
\left\{
\begin{array}{ll}
\sigma< s-\frac{1}{2}+\frac{1}{p}&\quad\text{if}\quad 2<p<\infty,\\
\sigma\leq s&\quad\text{if}\quad 1\leq p\leq 2.
\end{array}
\right.
\end{align}
In particular, for $1\leq p<\infty$ and $s>\max\{-\frac12,-\frac1p\}$, there exists $\sigma=\sigma(p,s)>-\frac12$ such that $M^{s,2}_p(\R)\hookrightarrow H^{\sigma}(\R)$.
\end{lemma}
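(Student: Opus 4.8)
The plan is to transfer everything to the Fourier side and reduce both norms to weighted sequence spaces indexed by $k\in\Z$. Writing $a_k := \|\widehat f\|_{L^2(I_k)}$, the defining norm \eqref{E:mod} reads $\|f\|_{M^{s,2}_p} = \bigl\|\langle k\rangle^s a_k\bigr\|_{\ell^p_k(\Z)}$. For the Sobolev norm I would split the integral over the intervals $I_k$ and use that $\langle\xi\rangle\simeq\langle k\rangle$ uniformly for $\xi\in I_k$ (since $|\xi-k|\le\tfrac12$ and $\langle\,\cdot\,\rangle\geq 2$), which gives
\begin{equation*}
\|f\|_{H^\sigma}^2 = \sum_{k\in\Z}\int_{I_k}\langle\xi\rangle^{2\sigma}|\widehat f(\xi)|^2\,d\xi \simeq \sum_{k\in\Z}\langle k\rangle^{2\sigma}a_k^2 = \bigl\|\langle k\rangle^\sigma a_k\bigr\|_{\ell^2_k(\Z)}^2.
\end{equation*}
Thus the lemma reduces to the purely sequential embedding $\bigl\|\langle k\rangle^\sigma a_k\bigr\|_{\ell^2} \lesssim \bigl\|\langle k\rangle^s a_k\bigr\|_{\ell^p}$ for every nonnegative sequence $(a_k)$, under the hypotheses \eqref{262}.

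For $1\le p\le 2$ this is immediate: since $\langle k\rangle\ge 2$ and $\sigma\le s$ we have $\langle k\rangle^\sigma\le\langle k\rangle^s$ pointwise, so the inclusion $\ell^p\hookrightarrow\ell^2$ (whose norm is $1$ for $p\le 2$) finishes the job without any loss.

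The case $2<p<\infty$ is the substantive one. Setting $b_k:=\langle k\rangle^s a_k$, so that $\|b_k\|_{\ell^p}=\|f\|_{M^{s,2}_p}$, I would apply H\"older's inequality with the conjugate exponents $\tfrac p2$ and $\tfrac p{p-2}$ to obtain
\begin{equation*}
\sum_{k}\langle k\rangle^{2\sigma}a_k^2 = \sum_k \langle k\rangle^{2(\sigma-s)}b_k^2 \le \Bigl(\sum_k b_k^p\Bigr)^{2/p}\Bigl(\sum_k\langle k\rangle^{2(\sigma-s)\frac p{p-2}}\Bigr)^{(p-2)/p}.
\end{equation*}
The weight sum converges precisely when $2(\sigma-s)\tfrac p{p-2}<-1$, and this rearranges to exactly the stated threshold $\sigma<s-\tfrac12+\tfrac1p$. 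This summability check is the one place where the strict inequality in \eqref{262} enters and is, in effect, the crux of the whole argument; everything else is bookkeeping. I do not anticipate any genuine obstacle here, only the need to match the exponent arithmetic to the claimed condition.

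Finally, the ``in particular'' statement follows by choosing $\sigma$ within the admissible window. For $1\le p\le 2$ we have $\max\{-\tfrac12,-\tfrac1p\}=-\tfrac12$, so under $s>-\tfrac12$ the choice $\sigma=s$ satisfies $\sigma\le s$ and $\sigma>-\tfrac12$. For $2<p<\infty$ we have $\max\{-\tfrac12,-\tfrac1p\}=-\tfrac1p$, and the interval $\bigl(-\tfrac12,\,s-\tfrac12+\tfrac1p\bigr)$ is nonempty exactly when $s>-\tfrac1p$; any $\sigma$ in this interval then lies above $-\tfrac12$ and satisfies \eqref{262}, completing the proof.
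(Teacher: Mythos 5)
Your proposal is correct and follows essentially the same route as the paper: both reduce the claim to a weighted sequence inequality via $\|f\|_{H^\sigma}\simeq\bigl\|\langle k\rangle^\sigma\|\widehat f\|_{L^2(I_k)}\bigr\|_{\ell^2_k}$, then use the pointwise bound $\langle k\rangle^\sigma\le\langle k\rangle^s$ together with $\ell^p\hookrightarrow\ell^2$ for $1\le p\le 2$, and H\"older with the weight $\langle k\rangle^{\sigma-s}$ in $\ell^{2p/(p-2)}$ for $2<p<\infty$, arriving at the identical summability condition $\frac{2p}{p-2}(\sigma-s)<-1$. The verification of the ``in particular'' clause is a harmless elaboration of what the paper leaves implicit.
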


\begin{proof}
We have 
\begin{align*}
\|u\|_{H^{\sigma}}
&\simeq \|\langle k\rangle^\sigma \|\widehat{u}\|_{L^2(I_k)}\|_{\ell^2_k}=\|\langle k\rangle^{\sigma-s}\langle k\rangle^{s} \|\widehat{u}\|_{L^2(I_k)}\|_{\ell^2_k}.
\end{align*}

For $p> 2$, we may use the H\"older inequality to bound
\begin{align*}
\|u\|_{H^{\sigma}}
&\lesssim\| \langle k\rangle^{\sigma-s}\|_{\ell_k^{2p/(p-2)}}\|\langle k\rangle^s \|\widehat{u}\|_{L^2(I_k)}\|_{\ell^p_k} \lesssim \|u\|_{M^{s,2}_p}
\end{align*}
provided 
$\frac{2p}{p-2}(\sigma-s)<-1$ or equivalently, $\sigma<s-\frac{1}{2}+\frac{1}{p}.$

For $\sigma\leq s$ and $1\leq p\leq 2$, we may simply bound
\begin{equation*}
\|u\|_{H^\sigma}\lesssim \|\langle k\rangle^s \|\widehat{u}\|_{L^2(I_k)}\|_{\ell^2_k}\lesssim \|\langle k\rangle^s \|\widehat{u}\|_{L^2(I_k)}\|_{\ell^p_k}=\|u\|_{M^{s,2}_p}. \qedhere
\end{equation*}
\end{proof}

As in the previous works \cite{BKV, HKNV, HKV, KLV:BO, KLV:CM, KV19} on sharp well-posedness for completely integrable systems in $H^s$ spaces, equicontinuity will play an important role in the proof of well-posedness.  In this paper, we need the proper notion of equicontinuity for the modulation spaces $M_p^{s,2}(\R)$.

\begin{definition}[Equicontinuity]\label{D:equi}  Fix $1\leq p<\infty$ and $s\geq 0$. We say that a bounded subset $Q$ of $M_p^{s,2}(\R)$ is \emph{equicontinuous} in $M_p^{s,2}(\R)$ if 
\begin{align*}
\lim_{K\to \infty}\, \sup_{f\in Q}\,\bigl\|\langle k\rangle^s \|\widehat f\|_{L^2(I_k)}\bigr\|_{\ell^p(|k|\ge K)}=0.
\end{align*} 
\end{definition}

\begin{remark}\label{R:equi}
Using the Plancherel theorem, we see that a bounded subset $Q$ of $M_p^{s,2}(\R)$ is equicontinuous in the sense above if and only if it satisfies the more traditional notion of equicontinuity:
 $$
\lim_{\delta\to 0} \,\sup_{f\in Q} \,\sup_{|y|\leq \delta}\,  \|f(x)-f(x+y)\|_{M^{s,2}_p} = 0.
$$
From either definition it is easy to verify that any set which is precompact in $M_p^{s,2}(\R)$ is equicontinuous. In particular, any Cauchy sequence is equicontinuous.
\end{remark}

\begin{proposition}[Characterization of equicontinuity]\label{P:equi}
Fix $1\leq p<\infty$ and $s\geq 0$ and let $Q\subset M_p^{s,2}(\R)$ be bounded. Then $Q$ is equicontinuous in $M_p^{s,2}(\R)$ if and only if there exists a sequence $\{c_k\}_{k\in\mathbb Z}$ with the following properties: \\[2mm]
\textsl{(i)} $1\leq c_k=c_{-k}\leq 1+\log(|k|+1)$, \\[2mm]
\textsl{(ii)} $c_{4|k|}\leq  c_{|k|}+1$, \\[2mm]
\textsl{(iii)} $c_{|k|}\leq c_{|k|+1}$, \\[2mm]
\textsl{(iv)} $\lim_{|k|\to\infty} c_{k}=+\infty$,  \\[2mm]
\textsl{(v)} $\sup_{f\in Q}\,\bigl\|c_k\langle k\rangle^s \|\widehat f\|_{L^2(I_k)}\bigr\|_{\ell_k^p}\le 2\sup_{f\in Q}\|f\|_{M_p^{s,2}}$.
\end{proposition}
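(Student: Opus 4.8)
The plan is to prove the two implications separately, writing $a_k(f) := \langle k\rangle^s\|\widehat f\|_{L^2(I_k)}$ and $M := \sup_{f\in Q}\|f\|_{M^{s,2}_p}$ throughout, so that $\|f\|_{M^{s,2}_p} = \|a_k(f)\|_{\ell^p_k}$ and $M<\infty$ by the boundedness of $Q$. (If $M=0$ then $Q\subseteq\{0\}$ and any sequence obeying (i)--(iv) suffices, so we may assume $M>0$.) The easy direction is the reverse one: suppose a sequence $\{c_k\}$ with (i)--(v) exists. By the monotonicity (iii), for $|k|\ge K$ we have $c_k\ge c_K$, whence $a_k(f)\le c_K^{-1} c_k a_k(f)$. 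Summing $p$-th powers over $|k|\ge K$ and invoking (v) gives $\|a_k(f)\|_{\ell^p(|k|\ge K)}\le 2M/c_K$ uniformly for $f\in Q$. Since $c_K\to\infty$ by (iv), the tails vanish uniformly as $K\to\infty$, which is precisely equicontinuity in the sense of Definition~\ref{D:equi}.

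The substance of the proposition is the forward direction, and this is where I expect the main difficulty. Here I would organize frequencies into quadyadic blocks $B_j := \{k\in\Z : 4^j\le|k|<4^{j+1}\}$ for $j\ge 1$ and $B_0 := \{k : |k|<4\}$, and record the uniform tail masses
\begin{equation*}
T_j(f):=\sum_{|k|\ge 4^j}a_k(f)^p \ \ (j\ge 1),\qquad T_0(f):=\sum_k a_k(f)^p,\qquad t_j:=\sup_{f\in Q}T_j(f),
\end{equation*}
so that $t_0\le M^p$ and, crucially, equicontinuity is exactly the statement $t_j\to 0$. I will then build a non-decreasing integer sequence $\gamma_0=1\le\gamma_1\le\gamma_2\le\cdots$ that increases by \emph{at most one} at each step and tends to $+\infty$, and set $c_k := \gamma_{j(k)}$, where $j(k)$ is the index of the block $B_{j(k)}\ni k$ (so $c_k$ depends only on $|k|$). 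The constraint that $\gamma$ rise by at most one per block immediately yields the symmetry and the bounds in (i) (since $\gamma_j\le 1+j\le 1+\log(|k|+1)$ on $B_j$), the quadyadic bound (ii) (since $k\neq 0$ in $B_j$ forces $4k\in B_{j+1}$, so $c_{4|k|}=\gamma_{j+1}\le\gamma_j+1=c_{|k|}+1$), the monotonicity (iii), and the divergence (iv).

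It remains to arrange (v), and this is the crux. Using $\sum_{k\in B_j}a_k(f)^p = T_j(f)-T_{j+1}(f)$, a summation by parts gives, for every $f\in Q$,
\begin{equation*}
\|c_k a_k(f)\|_{\ell^p}^p = \sum_{j\ge 0}\gamma_j^p\bigl(T_j(f)-T_{j+1}(f)\bigr)\le \gamma_0^p\,T_0(f)+\sum_{j\ge 1}\bigl(\gamma_j^p-\gamma_{j-1}^p\bigr)T_j(f),
\end{equation*}
where the subtracted, nonnegative boundary term has simply been discarded. Since the coefficients $\gamma_j^p-\gamma_{j-1}^p\ge 0$, I may replace each $T_j(f)$ by its supremum $t_j$ to obtain a bound that is \emph{uniform in $f$}. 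Now I would let $\gamma$ jump from $n$ to $n+1$ only at a strictly increasing, sufficiently sparse sequence of block-indices $j_n$; because $t_j\to 0$, each $j_n$ can be taken so large that $[(n+1)^p-n^p]\,t_{j_n}\le 2^{-n}(2^p-1)M^p$. Then the cost telescopes to
\begin{equation*}
\gamma_0^p t_0 + \sum_{n\ge 1}\bigl[(n+1)^p-n^p\bigr]t_{j_n}\le M^p+(2^p-1)M^p=(2M)^p,
\end{equation*}
which is exactly (v).

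The main obstacle, as signalled, is this final balancing act. The weight must diverge (forcing infinitely many jumps of $\gamma$) while keeping the weighted $\ell^p$ mass within the budget $(2M)^p$; the summation-by-parts step is what converts the per-$f$ estimate into the uniform bound demanded by (v), and the rapid smallness of the tails $t_{j_n}$ is what pays for the unbounded growth of $\gamma$ without violating the slow-growth ceiling (i)--(ii). Everything else is bookkeeping: the quadyadic block structure is dictated precisely by the base-$4$ constraint (ii), and the at-most-one-per-block growth rule is what simultaneously secures the logarithmic upper bound and the monotonicity.
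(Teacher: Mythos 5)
Your proof is correct and follows essentially the same route as the paper: both constructions select a sparse, geometrically separated sequence of tail thresholds at which a slowly growing weight increments by one, and pay for the divergence of the weight using the uniform $2^{-n}$-decay of the tail masses supplied by equicontinuity. The only cosmetic differences are that the paper takes $c_k^p$ (rather than $c_k$ itself) to be the threshold-counting function, so that property (v) follows by interchanging the order of summation instead of by Abel summation, and that the paper leaves the easy converse direction to the reader.
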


\begin{proof}
By Definition~\ref{D:equi}, properties (iv) and (v) guarantee that the set $Q$ is equicontinuous in $M^{s,2}_p(\R)$.  Thus, it remains to prove that equicontinuity of $Q$ guarantees the existence of a sequence $c_k$ with the stated properties.

Let
$$
A:= \sup_{f\in Q}\|f\|_{M^{s,2}_p} <\infty.
$$
As $Q$ is equicontinuous in $M^{s,2}_p(\R)$, there exists an increasing sequence $\{k(m)\}_{m\geq 1}\subset \mathbb N$ such that 
\begin{align}\label{11:50}
k(m+1)\geq 4k(m) \quad\text{and}\quad \sup_{f\in Q}\, \sum_{|k|>k(m)} \langle k \rangle^{ps}\|\widehat{f}\|_{L^2(I_k)}^p\leq 2^{-m}A^p.
\end{align}

For $k\in\mathbb{Z}$ we define
$$
c_k=\Bigl[\#\{m\in\mathbb{N}: k(m)<|k|\}+1\Bigr]^{\frac1p}.
$$
In view of \eqref{11:50}, we have
\begin{align}\label{aksup}
\sup_{f\in Q}\, \bigl\|c_k\langle k \rangle^{s}\|\widehat{f}\|_{L^2(I_k)}\bigr\|_{\ell^p_k}^p \leq 2A^p,
\end{align}
which immediately implies property (v).  Properties (iii) and (iv) are also easily verified for this construction.  The sub-logarithmic growth in  property (i) and property (ii) follow from the observation that $k(m)\geq 4^{m-1}$.
\end{proof}

To upgrade $H^\sigma(\R)$-convergence of a sequence of Schwartz solutions to convergence in the modulation spaces $M^{s,2}_p(\R)$, we will prove and then exploit that the orbits of such solutions form an equicontinuous set in $M^{s,2}_p(\R)$.  This is formalized by the following:

\begin{proposition}\label{P:upgrade convg}
Fix $1\leq p<\infty$ and $s\geq 0$.  Let $\sigma=\sigma(s,p)>-\frac12$ be chosen in accordance with Lemma~\ref{L:embedding} so that $M^{s,2}_p(\R)\hookrightarrow H^{\sigma}(\R)$.  Suppose that the sequence $\{f_n\}_{n\geq 1}$ is bounded and equicontinuous in $M^{s,2}_p(\R)$ and that it converges in $H^\sigma(\R)$.  Then $\{f_n\}_{n\geq 1}$ converges in $M^{s,2}_p(\R)$.
\end{proposition}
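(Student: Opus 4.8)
The plan is to show that $\{f_n\}$ is Cauchy in $M^{s,2}_p(\R)$; since this space is complete (being defined as a completion of Schwartz space), convergence then follows automatically. The strategy is a standard frequency-truncation argument: equicontinuity controls the high-frequency tails uniformly in $n$, while on any fixed bounded frequency window the $M^{s,2}_p$ and $H^\sigma$ norms are comparable, so the assumed $H^\sigma$-convergence disposes of the remaining low-frequency part.

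Fix $\eps>0$. First I would invoke equicontinuity. By Definition~\ref{D:equi}, there exists $K$ such that
\[
\sup_n \bigl\| \langle k\rangle^s \|\widehat{f_n}\|_{L^2(I_k)}\bigr\|_{\ell^p(|k|>K)} < \eps .
\]
Applying the triangle inequality first in $L^2(I_k)$ and then in $\ell^p(|k|>K)$, the high-frequency portion of $\|f_n-f_m\|_{M^{s,2}_p}$ is at most $2\eps$ for \emph{every} pair $n,m$.

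Next I would treat the low-frequency portion $|k|\le K$, with $K$ now fixed. On the bounded set $\bigcup_{|k|\le K} I_k = [-K-\tfrac12, K+\tfrac12)$, the weight $\langle\xi\rangle^{-\sigma}$ is bounded above by a constant $C_{K,\sigma}$; this uses only that the frequency region is bounded, so the sign of $\sigma$ is irrelevant (recall $\sigma$ may be negative, down to $\sigma>-\tfrac12$). Hence, for each such $k$,
\[
\|\widehat{f_n}-\widehat{f_m}\|_{L^2(I_k)} \le C_{K,\sigma}\,\bigl\|\langle\xi\rangle^\sigma(\widehat{f_n}-\widehat{f_m})\bigr\|_{L^2(I_k)} .
\]
Since the index set $\{|k|\le K\}$ is finite, all $\ell^q$ norms on it are equivalent and $\langle k\rangle^s\le \langle K\rangle^s$; summing therefore gives
\[
\bigl\| \langle k\rangle^s \|\widehat{f_n}-\widehat{f_m}\|_{L^2(I_k)}\bigr\|_{\ell^p(|k|\le K)} \lesssim_{K,\sigma,s,p} \|f_n-f_m\|_{H^\sigma} .
\]

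To conclude, note that since $\{f_n\}$ converges in $H^\sigma(\R)$ it is Cauchy there; thus, with $K$ fixed, there is $N$ so that the right-hand side above is $<\eps$ whenever $n,m\ge N$. Combining the two regimes via $\|\cdot\|_{\ell^p(\Z)}\le \|\cdot\|_{\ell^p(|k|\le K)}+\|\cdot\|_{\ell^p(|k|>K)}$ yields $\|f_n-f_m\|_{M^{s,2}_p}<3\eps$ for all $n,m\ge N$. As $\eps$ was arbitrary, $\{f_n\}$ is Cauchy and hence convergent in $M^{s,2}_p(\R)$. I do not anticipate a serious obstacle here; the only point requiring care is the comparison of $\|\cdot\|_{L^2(I_k)}$ with the $H^\sigma$ contribution on the low-frequency window, where one must lean on boundedness of the frequency region rather than any sign condition on $\sigma$, together with the equivalence of $\ell^p$ and $\ell^2$ on the finite index set $\{|k|\le K\}$.
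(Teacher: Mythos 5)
Your proposal is correct and follows essentially the same route as the paper: completeness reduces the claim to Cauchyness, equicontinuity handles the tail $|k|>K$ uniformly in $n,m$, and on the finite window $|k|\le K$ the $M^{s,2}_p$ difference is dominated by the $H^\sigma$ difference up to a $K$-dependent constant. The only cosmetic difference is that the paper quantifies that constant as $\langle K\rangle^{s-\sigma}(2K+1)^{\max\{0,\frac1p-\frac12\}}$ using $\sigma\le s$ from Lemma~\ref{L:embedding}, whereas you simply observe that boundedness of the frequency window suffices regardless of the sign of $\sigma$; both are valid.
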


\begin{proof}
As the modulation spaces $M^{s,2}_p(\R)$ are complete, it suffices to show that the sequence $\{f_n\}_{n\geq 1}$ is Cauchy in $M^{s,2}_p(\R)$.

The equicontinuity hypothesis guarantees that given $\varepsilon>0$, there exists $K_\varepsilon>0$ so that 
$$
\sup_{n\geq 1}\,\bigl\|\langle k\rangle^s \|\widehat f_n\|_{L^2(I_k)}\bigr\|_{\ell^p(|k|> K_\varepsilon)}<\tfrac\varepsilon4.
$$
In particular,
\begin{align}\label{high}
\bigl\| \langle k\rangle^s \|\widehat f_n -\widehat f_m\|_{L^2(I_k)} \bigr\|_{\ell^p(|k|> K_\varepsilon)}\leq 2\sup_{n\geq 1}\,\bigl\|\langle k\rangle^s \|\widehat f_n\|_{L^2(I_k)}\bigr\|_{\ell^p(|k|> K_\varepsilon)}\leq \tfrac\varepsilon2.
\end{align}

By \eqref{262} we have $\sigma\leq s$.  Thus, we may use the embedding $\ell^2\hookrightarrow \ell^p$ if $2\leq p<\infty$ and the H\"older inequality if $1\leq p<2$ to bound 
\begin{align*}
\bigl\| \langle k\rangle^s \|\widehat f_n &-\widehat f_m\|_{L^2(I_k)} \bigr\|_{\ell^p(|k|\leq K_\varepsilon)} \\
&\leq \langle K_\varepsilon\rangle^{s-\sigma} \bigl\| \langle k\rangle^\sigma \|\widehat f_n -\widehat f_m\|_{L^2(I_k)} \bigr\|_{\ell^p(|k|\leq K_\varepsilon)}\\
&\leq  \langle K_\varepsilon\rangle^{s-\sigma}  (2K_\varepsilon+1)^{\max\{0, \frac1p-\frac12\}}  \bigl\| \langle k\rangle^\sigma \|\widehat f_n -\widehat f_m\|_{L^2(I_k)} \bigr\|_{\ell^2(|k|\leq K_\varepsilon)}\\
&\lesssim \langle K_\varepsilon\rangle^{s-\sigma+ \max\{0, \frac1p-\frac12\}} \|f_n-f_m\|_{H^\sigma}.
\end{align*}
As $f_n$ converges in $H^\sigma(\R)$, we may find $N=N(\varepsilon, K_\varepsilon)$ so that $n, m\geq N$ guarantees
\begin{align}\label{low}
\bigl\| \langle k\rangle^s \|\widehat f_n -\widehat f_m\|_{L^2(I_k)} \bigr\|_{\ell^p(|k|\leq K_\varepsilon)}\leq \tfrac\varepsilon2.
\end{align}

Combining \eqref{high} and \eqref{low} with the triangle inequality, for $n, m\geq N$ we find
\begin{align*}
\|f_n-f_m\|_{M^{s,2}_p}&\leq \varepsilon,
\end{align*}
which demonstrates that $\{f_n\}_{n\geq 1}$ is Cauchy in $M^{s,2}_p(\R)$.
\end{proof}

\subsection{Conservation laws and the perturbation determinant}\label{SS:perturbation determinant}
Following \cite{KVZ18}, for $\varkappa>0$ we define
\begin{align}\label{alpha def}
\alpha(\varkappa;f)=\Re\sum_{j=1}^\infty\frac{(\mp1)^{j-1}}{j}\tr\left\{\left[(\varkappa-\partial)^{-\frac{1}{2}}f (\varkappa+\partial)^{-1}\bar{f}(\varkappa-\partial)^{-\frac{1}{2}}\right]^j\right\},
\end{align}
which formally agrees with the logarithm of the perturbation determinant.  The operators $(\varkappa\pm \partial)^{-\frac12}$ are defined via their Fourier symbols, where the branch of the square root is chosen to be positive at frequency zero.  The existence of the traces and the summability of the series in \eqref{alpha def} will be discussed below.

We write
$$
\alpha(\varkappa;f)=\sum_{j=1}^\infty\alpha_{2j}(\varkappa;f),
$$
where $\alpha_{2j}$ denotes the term in the series \eqref{alpha def} that is of order $2j$ in $f$, namely,
$$
\alpha_{2j}(\varkappa;f)=\Re\frac{(\mp1)^{j-1}}{j}\tr\left\{\left[(\varkappa-\partial)^{-\frac{1}{2}}f (\varkappa+\partial)^{-1}\bar{f}(\varkappa-\partial)^{-\frac{1}{2}}\right]^j\right\}.
$$

In \cite{KVZ18}, the authors proved that $\alpha$ is well-defined and conserved by Schwartz solutions to both \eqref{mkdv} and \eqref{nls}, provided $\varkappa$ is large.  Specifically, we have:

\begin{proposition}[Proposition 4.4 in \cite{KVZ18}]\label{P:alpha conserved}  There exists a small absolute constant $\eta>0$ so that the following holds:  If $u_0$ is a Schwartz function and $u(t)$ denotes the global solution to either \eqref{mkdv} or \eqref{nls} with initial data $u(0)=u_0$ and $\varkappa>0$ is large enough so that
\begin{align}\label{k large}
\int \log\bigl(4 + \tfrac{\xi^2}{\varkappa^2}\bigr)\frac{|\widehat u_0(\xi)|^2}{\sqrt{4\varkappa^2 + \xi^2}} \,d\xi < \eta,
\end{align}
then the series defining $\alpha(\varkappa;u(t))$ converges for all $t\in\R$ and 
\begin{equation*}
\frac{d}{dt} \alpha(\varkappa;u(t)) = 0.
\end{equation*}
\end{proposition}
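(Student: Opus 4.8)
The statement splits into a convergence claim and a conservation claim, and I would treat them in that order, linking them by a continuity argument in $t$.

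\emph{Convergence.} The basic operator in \eqref{alpha def} factors as
\[
A := (\varkappa-\partial)^{-\frac12} f (\varkappa+\partial)^{-1}\bar f (\varkappa-\partial)^{-\frac12} = G_1 G_2,
\]
where $G_1 = (\varkappa-\partial)^{-\frac12} f (\varkappa+\partial)^{-\frac12}$ and $G_2 = (\varkappa+\partial)^{-\frac12}\bar f (\varkappa-\partial)^{-\frac12}$. Computing the Fourier-side Hilbert--Schmidt kernels of $G_1$ and $G_2$ shows that both obey
\[
\|G_i\|_{\mathrm{HS}}^2 = \tfrac1{2\pi}\int |\widehat f(\zeta)|^2\, M(\zeta,\varkappa)\,d\zeta, \qquad M(\zeta,\varkappa) = \int_\R \frac{d\eta}{\sqrt{\varkappa^2+(\eta+\zeta)^2}\,\sqrt{\varkappa^2+\eta^2}},
\]
and since $M(\zeta,\varkappa)\simeq \log(4+\zeta^2/\varkappa^2)/\sqrt{4\varkappa^2+\zeta^2}$, this quantity is comparable to the left-hand side of \eqref{k large}; this is the content of Lemmas~\ref{L:HS bdd} and~\ref{L:M controls HS}. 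Hence $\|A\|_{\mathfrak S_1}\le \|G_1\|_{\mathrm{HS}}\|G_2\|_{\mathrm{HS}}$ is comparable to the left side of \eqref{k large}, so once $\eta$ is fixed small enough that \eqref{k large} forces $\|A\|_{\mathfrak S_1}<\tfrac12$, the bound $|\tr(A^j)|\le \|A\|_{\mathrm{op}}^{j-1}\|A\|_{\mathfrak S_1}\le \|A\|_{\mathfrak S_1}^{j}$ makes the series in \eqref{alpha def} converge geometrically. In particular it converges at $t=0$, and at any later time where \eqref{k large} persists.

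\emph{Conservation.} On any time interval where \eqref{k large} holds, I would prove $\frac{d}{dt}\alpha(\varkappa;u(t))=0$ by differentiating the series term by term---legitimate because the geometric convergence is locally uniform in $t$ and $u\in C(\R;\mathcal S)$---and using cyclicity of the trace to reduce $\frac{d}{dt}\tr(A^j)$ to $j\,\tr(A^{j-1}\partial_t A)$. After inserting the evolution equation for $\partial_t f$ (namely $\partial_t f = -\partial^3 f\pm 6|f|^2\partial f$ for \eqref{mkdv}, respectively $\partial_t f = i\partial^2 f\mp 2i|f|^2 f$ for \eqref{nls}), the conceptual engine is that $\exp(\alpha)$ is the perturbation determinant of the Zakharov--Shabat/AKNS operator shared by \eqref{mkdv} and \eqref{nls}; along either flow this operator satisfies a Lax equation $\partial_t L=[P,L]$, under which such a determinant is invariant. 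Concretely, the collected terms should assemble into the trace of a commutator with the free resolvents---equivalently, the integral of a perfect $x$-derivative, which vanishes by the decay of Schwartz data.

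\emph{Globalization and the main obstacle.} The remaining point is that \eqref{k large} is only assumed at $t=0$, whereas convergence is needed for all $t$. I would run a continuity argument: the set of $t$ on which the series converges is open and contains $0$, and the previous step makes $\alpha(\varkappa;u(\cdot))$ locally constant there. To see this set is all of $\R$, I would bound the left-hand side $N(t)$ of \eqref{k large}, using $\log(4+\xi^2/\varkappa^2)\le \log4+\tfrac{\xi^2}{4\varkappa^2}$, by
\[
N(t)\lesssim \varkappa^{-1}\|u(t)\|_{L^2}^2 + \varkappa^{-2}\|u(t)\|_{H^{1/2}}^2,
\]
and then invoke conservation of the mass $\|u(t)\|_{L^2}$ together with the uniform-in-time Sobolev control available for global Schwartz solutions of these integrable equations, so that taking $\varkappa$ large keeps $N(t)$ below threshold for every $t$. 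I expect the main obstacle to lie in the conservation step: rigorously justifying the term-by-term differentiation inside the trace-class ideal and verifying that the commutator structure survives the real-part operation and the focusing/defocusing ($\mp$) sign bookkeeping. Propagating the smallness condition uniformly in time---delicate in the focusing case, where the conserved energy furnishes no upper bound on $\|u(t)\|_{H^{1/2}}$---is the secondary difficulty, handled by the continuity argument above.
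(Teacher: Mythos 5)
This proposition is not proved in the paper: it is imported verbatim from \cite{KVZ18} (Proposition 4.4 there), with the convergence mechanism recalled via Lemma~\ref{L:HS bdd} and \eqref{tailbound}, and with the extension of convergence to all times explicitly attributed to ``a bootstrap argument'' in \cite{KVZ18}. Your convergence step (factoring into two Hilbert--Schmidt operators, computing the kernels, and using $|\tr(A^j)|\le\|A\|_{\mathfrak I_1}\|A\|_{\mathrm{op}}^{j-1}$) is exactly the mechanism of Lemma~\ref{L:HS bdd}, and your conservation step is a reasonable, if schematic, outline of the Lax-pair computation; neither of these is where the difficulty lies.

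The genuine gap is in your globalization step, and it is the step the proposition is really about. The hypothesis fixes a single $\varkappa$ for which \eqref{k large} holds at $t=0$; you are not free to ``take $\varkappa$ large'' again at later times, so the bound $N(t)\lesssim\varkappa^{-1}\|u(t)\|_{L^2}^2+\varkappa^{-2}\|u(t)\|_{H^{1/2}}^2$ cannot be made small for that fixed $\varkappa$ unless you have a uniform-in-time $H^{1/2}$ bound with a constant compatible with the given $\varkappa$ --- which is precisely the kind of a priori information the conservation of $\alpha$ is being constructed to supply, and which (as you concede) is unavailable in the focusing case. The argument is therefore circular in the defocusing case and incomplete in the focusing case. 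The correct closure, and the one used in \cite{KVZ18}, propagates the smallness condition using the conserved quantity itself: since the left-hand side of \eqref{k large} is pointwise decreasing in $\varkappa$, the hypothesis holds for every $\varkappa'\ge\varkappa$, so on the (open, nonempty) bootstrap interval the entire family $\{\alpha(\varkappa';u(t))\}_{\varkappa'\ge\varkappa}$ is defined and conserved; one then recovers the weight $\log\bigl(4+\xi^2/\varkappa^2\bigr)\bigl(4\varkappa^2+\xi^2\bigr)^{-1/2}$ by averaging $\alpha_2(\varkappa';\cdot)\simeq\alpha(\varkappa';\cdot)$ over the spectral parameter $\varkappa'\ge\varkappa$ (note that a single such integration only produces $|\xi|^{-1}$, which is \emph{weaker} than the required weight, so the averaging must be set up to reproduce the logarithm). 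This yields $N(t)\le C\eta$ on the bootstrap interval, which for $\eta$ small enough keeps $N(t)$ strictly below the threshold and makes the interval closed as well as open. Without this self-referential use of the conserved family, the continuity argument you describe does not close.
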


The significance of condition \eqref{k large} is evident from the following:

\begin{lemma}[Lemma 4.1 in \cite{KVZ18}] \label{L:HS bdd}For $\varkappa>0$ and $f$ a Schwartz function, we have
\begin{equation}\label{AKNS I2}
\Bigl\|(\varkappa-\partial)^{-1/2} f (\varkappa+\partial)^{-1/2} \Bigr\|^2_{\mathfrak I_2} \simeq \int_\R \log\bigl(4 + \tfrac{\xi^2}{\varkappa^2}\bigr)\frac{|\widehat f(\xi)|^2}{\sqrt{4\varkappa^2 + \xi^2}}\,d\xi.
\end{equation}
Here $\mathfrak I_2$ denotes the Hilbert--Schmidt norm.
\end{lemma}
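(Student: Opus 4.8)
The plan is to compute the Hilbert--Schmidt norm explicitly via the integral kernel of the operator $A:=(\varkappa-\partial)^{-1/2} f (\varkappa+\partial)^{-1/2}$. Since $\|A\|_{\mathfrak I_2}^2 = \iint |A(x,y)|^2\,dx\,dy$, I would first pass to the Fourier side, where each of the two resolvent-type factors acts as a Fourier multiplier. Writing everything out, the kernel of $A$ in the Fourier variables becomes $\widehat A(\xi,\eta) = (\varkappa+i\xi)^{-1/2}\,\widehat f(\xi-\eta)\,(\varkappa-i\eta)^{-1/2}$ (up to the $2\pi$ normalization in \eqref{FT}), where I am using that $(\varkappa-\partial)$ has symbol $(\varkappa - i\xi)$ under our conventions and being careful about which sign of $i$ attaches to which factor. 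By the Plancherel theorem, the Hilbert--Schmidt norm is unchanged when computed on the Fourier side, so
\begin{align*}
\|A\|_{\mathfrak I_2}^2 = \iint \frac{|\widehat f(\xi-\eta)|^2}{|\varkappa + i\xi|\,|\varkappa - i\eta|}\,d\xi\,d\eta = \iint \frac{|\widehat f(\xi-\eta)|^2}{\sqrt{\varkappa^2+\xi^2}\,\sqrt{\varkappa^2+\eta^2}}\,d\xi\,d\eta.
\end{align*}

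The key step is then to reduce this double integral to the single integral on the right-hand side of \eqref{AKNS I2}. I would introduce the new variables $\zeta = \xi - \eta$ (the frequency at which $\widehat f$ is evaluated) and integrate out the remaining variable. Concretely, setting $\eta = \xi - \zeta$ and integrating in $\xi$ for each fixed $\zeta$ gives
\begin{align*}
\|A\|_{\mathfrak I_2}^2 = \int_\R |\widehat f(\zeta)|^2 \left[ \int_\R \frac{d\xi}{\sqrt{\varkappa^2+\xi^2}\,\sqrt{\varkappa^2+(\xi-\zeta)^2}} \right] d\zeta,
\end{align*}
so the whole problem collapses to understanding the inner integral $J(\zeta) := \int_\R \bigl[(\varkappa^2+\xi^2)(\varkappa^2+(\xi-\zeta)^2)\bigr]^{-1/2}\,d\xi$ as a function of $\zeta$ and $\varkappa$. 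The claimed equivalence \eqref{AKNS I2} amounts precisely to the statement that $J(\zeta) \simeq \log\bigl(4+\tfrac{\zeta^2}{\varkappa^2}\bigr)\big/\sqrt{4\varkappa^2+\zeta^2}$, uniformly in $\zeta\in\R$ and $\varkappa>0$.

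The main obstacle, and the technical heart of the argument, is establishing this two-sided bound on $J(\zeta)$. I expect a scaling reduction to simplify matters: substituting $\xi = \varkappa t$ and $\zeta = \varkappa\lambda$ extracts a factor $\varkappa^{-1}$ and reduces $J$ to a single-variable integral $\varkappa^{-1}\int_\R \bigl[(1+t^2)(1+(t-\lambda)^2)\bigr]^{-1/2}\,dt$ depending only on the dimensionless ratio $\lambda = \zeta/\varkappa$. One then needs only the asymptotics in $\lambda$: for $|\lambda|$ bounded the integral is comparable to a positive constant (matching $\log(4+\lambda^2)/\sqrt{4+\lambda^2}\simeq 1$), while for $|\lambda|\to\infty$ the two peaks of the integrand near $t=0$ and $t=\lambda$ are well separated, each contributing a logarithmic factor $\sim \log|\lambda|/|\lambda|$ (matching $\log(4+\lambda^2)/\sqrt{4+\lambda^2}$). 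The modified bracket $\langle\cdot\rangle = (4+|\cdot|^2)^{1/2}$ is chosen exactly so that the constant inside the logarithm and the $4\varkappa^2$ in the denominator come out cleanly and the equivalence holds with no vanishing at $\zeta=0$. I would carry out the large-$|\lambda|$ estimate by splitting the $t$-integral into regions near each singularity-free peak and the transition region between them; this elementary but slightly delicate asymptotic analysis is where the real work lies, since the implicit constants must be shown independent of $\varkappa$.
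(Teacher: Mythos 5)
Your argument is correct and is essentially the standard proof of this lemma: the paper itself does not reprove it but imports it from \cite{KVZ18}, where the Hilbert--Schmidt norm is likewise computed as the double integral $\iint |\widehat f(\xi-\eta)|^2\,(\varkappa^2+\xi^2)^{-1/2}(\varkappa^2+\eta^2)^{-1/2}\,d\xi\,d\eta$ (up to the $2\pi$ from the convolution normalization) and the inner integral is shown, after the scaling reduction $\xi=\varkappa t$, $\zeta=\varkappa\lambda$, to be comparable to $\log\bigl(4+\tfrac{\zeta^2}{\varkappa^2}\bigr)\big/\sqrt{4\varkappa^2+\zeta^2}$ exactly as you describe. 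The only point to tidy is the sign bookkeeping in the Fourier symbols (under \eqref{FT}, $(\varkappa-\partial)$ has symbol $\varkappa-i\xi$, so the factor you attach to $\xi$ should be $(\varkappa-i\xi)^{-1/2}$), which is immaterial here since only the moduli enter.
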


For $j\geq 2$, Lemma~\ref{L:HS bdd} allows us bound
\begin{align} \label{tailbound}
\bigl|\alpha_{2j}(\varkappa;f)\bigr|&\leq \Bigl\|(\varkappa-\partial)^{-1/2} f (\varkappa+\partial)^{-1/2} \Bigr\|^{2j}_{\mathfrak I_2}\notag\\
&\lesssim \biggl\{\int_\R \log\bigl(4 + \tfrac{\xi^2}{\varkappa^2}\bigr)\frac{|\widehat f(\xi)|^2}{\sqrt{4\varkappa^2 + \xi^2}}\,d\xi\biggr\}^j,
\end{align}
and so \eqref{k large} guarantees the convergence of the series \eqref{alpha def} with $f=u_0$ and $\eta$ sufficiently small.  The convergence of the series defining $\alpha(\varkappa; u(t))$ for all other $t\in \R$ is deduced in \cite{KVZ18} via a bootstrap argument.

\begin{lemma}\label{L:M controls HS}
For any $1\leq p<\infty$, $s\geq 0$, and $0<\delta<\min\{\frac12, \frac1p\}$, we have
\begin{align}\label{428}
\int \log\bigl(4 + \tfrac{\xi^2}{\varkappa^2}\bigr)\frac{|\widehat f(\xi)|^2}{\sqrt{4\varkappa^2 + \xi^2}} \,d\xi \lesssim_\delta \varkappa^{-2\delta} \|f\|_{M^{s,2}_p}^2,
\end{align}
uniformly for $\varkappa\geq \tfrac12$ and $f\in \mathcal S(\R)$.  
\end{lemma}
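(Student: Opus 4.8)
The plan is to bound the weighted integral by a negative-order Sobolev norm of $f$ via a single pointwise estimate on the weight, and then to read off the result from the embedding $M^{0,2}_p\hookrightarrow H^\sigma$ furnished by Lemma~\ref{L:embedding}. As a preliminary reduction, note that $\langle k\rangle^s\geq 1$ for $s\geq 0$, so $\|f\|_{M^{0,2}_p}\leq\|f\|_{M^{s,2}_p}$ and it suffices to treat $s=0$.

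The heart of the argument is the pointwise estimate
\[
w_\varkappa(\xi):=\frac{\log(4+\xi^2/\varkappa^2)}{\sqrt{4\varkappa^2+\xi^2}}\lesssim_\delta\varkappa^{-2\delta}\langle\xi\rangle^{-(1-2\delta)},
\]
valid for $\varkappa\geq\tfrac12$ and $0<\delta<\tfrac12$. To prove it I would substitute $t=|\xi|/\varkappa$, so that the numerator becomes $\log(4+t^2)$ and $\sqrt{4\varkappa^2+\xi^2}=\varkappa\sqrt{4+t^2}$. The key elementary comparison is $4+\varkappa^2 t^2\leq 4\varkappa^2(4+t^2)$, whose verification uses $\varkappa\geq\tfrac12$ (so that $4\leq 16\varkappa^2$). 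Raising this to the power $(1-2\delta)/2\geq 0$ — where the constraint $\delta<\tfrac12$ enters, to preserve the direction of the inequality — and observing that the powers of $\varkappa$ then cancel exactly, the claim reduces to the boundedness of $s\mapsto s^{-\delta}\log s$ on $[4,\infty)$, whose supremum is finite (indeed $\simeq1/\delta$).

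Integrating this pointwise bound against $|\widehat f|^2$ gives
\[
\int \log\bigl(4+\tfrac{\xi^2}{\varkappa^2}\bigr)\frac{|\widehat f(\xi)|^2}{\sqrt{4\varkappa^2+\xi^2}}\,d\xi\lesssim_\delta\varkappa^{-2\delta}\int\langle\xi\rangle^{-(1-2\delta)}|\widehat f(\xi)|^2\,d\xi=\varkappa^{-2\delta}\|f\|_{H^{-(1/2-\delta)}}^2.
\]
Finally, I would apply Lemma~\ref{L:embedding} with $\sigma=-\tfrac12+\delta$: the embedding $M^{0,2}_p\hookrightarrow H^\sigma$ holds provided $\sigma<-\tfrac12+\tfrac1p$ when $p>2$ (i.e. $\delta<\tfrac1p$) and $\sigma\leq 0$ when $1\leq p\leq 2$ (i.e. $\delta<\tfrac12$). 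Thus the hypothesis $0<\delta<\min\{\tfrac12,\tfrac1p\}$ is precisely what makes both the pointwise bound and the embedding simultaneously available, and we conclude $\|f\|_{H^{-(1/2-\delta)}}\lesssim\|f\|_{M^{0,2}_p}\leq\|f\|_{M^{s,2}_p}$, which combines with the displayed inequality to finish the proof.

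I expect the main obstacle to be the pointwise weight bound, where one must capture the two regimes $|\xi|\lesssim\varkappa$ and $|\xi|\gtrsim\varkappa$ by a single power weight and confirm that the constraint $\delta<\tfrac12$ (needed both to raise the comparison inequality to a nonnegative power and to tame the logarithm) dovetails with the $\delta<\tfrac1p$ coming from the embedding; everything downstream is then an immediate consequence of Lemma~\ref{L:embedding}. (An alternative to the pointwise bound would be to decompose the integral over the intervals $I_k$, estimate $\sup_{\xi\in I_k}w_\varkappa(\xi)$, and apply H\"older in $\ell^p_k$, but the pointwise route is cleaner since it reuses the embedding lemma verbatim.)
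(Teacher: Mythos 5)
Your proof is correct and follows essentially the same route as the paper: absorb the logarithm into a $\delta$-power of the weight to extract the factor $\varkappa^{-2\delta}$ and reduce to the $H^{-(1/2-\delta)}$ norm, then control that by the $M^{0,2}_p\leq M^{s,2}_p$ norm. The only cosmetic difference is that you invoke Lemma~\ref{L:embedding} for the final step, whereas the paper redoes the underlying two-case argument ($\ell^p\hookrightarrow\ell^2$ for $p\leq 2$, H\"older for $p>2$) inline on the decomposition $\sum_m\langle m\rangle^{-1+2\delta}\|\widehat f\|_{L^2(I_m)}^2$ -- the substance is identical.
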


Given $u_0\in\mathcal S(\R)$, \eqref{428} shows that  \eqref{k large} holds whenever  $\varkappa\geq \varkappa_0(\eta, \delta,  \|u_0\|_{M^{s,2}_p})$.  It also shows that \eqref{k large} holds uniformly for all $\varkappa\geq\frac12$ if the modulation norm of the initial data is small enough.  In this paper, we will adopt the second approach, using the scaling symmetry to achieve the requisite smallness of the initial data; see Lemma~\ref{L:scaling} below.

\begin{proof}
For $0<\delta<\frac12$ and $\varkappa\geq \frac12$, we may bound
\begin{align*}
\int \log\bigl(4 + \tfrac{\xi^2}{\varkappa^2}\bigr)\frac{|\widehat f(\xi)|^2}{\sqrt{4\varkappa^2 + \xi^2}} \,d\xi
&\leq \frac1{\delta\varkappa^{2\delta}}\!\int \!\frac{|\widehat f(\xi)|^2}{(4\varkappa^2 + \xi^2)^{\frac12-\delta}} \,d\xi
\leq \frac1{\delta \varkappa^{2\delta}}\! \int\! \frac{|\widehat f(\xi)|^2}{(1 + \xi^2)^{\frac12-\delta}} \,d\xi.
\end{align*}
To continue, we decompose
\begin{eqnarray*}
\int  \frac{|\widehat f(\xi)|^2}{(1 + \xi^2)^{\frac12-\delta}} \,d\xi \simeq \sum_{m\in \Z}\langle m\rangle^{-1+2\delta}\|\widehat f\|_{L^2(I_m)}^2.
\end{eqnarray*}
For $1\le p\le2$ we use $\langle m\rangle^{-1+2\delta}\leq 1$ and the embedding $\ell^p\hookrightarrow \ell^2$ to bound
\begin{align*}
\sum_{m\in \Z}\langle m\rangle^{-1+2\delta}\|\widehat f\|_{L^2(I_m)}^2\leq \bigl\|\|\widehat f\|_{L^2(I_m)}\bigr\|_{\ell^p_m}^2 = \|f\|_{M^{0,2}_p}^2\leq\|f \|_{M^{s,2}_p}^2.
\end{align*}
For $2<  p<\infty$ we use that $0<\delta<\frac1p$ and the H\"older inequality to bound
\begin{align*}
 \sum_{m\in \Z}\langle m\rangle^{-1+2\delta}\|\widehat f\|_{L^2(I_m)}^2
&\lesssim \bigl\|\langle m\rangle^{-1+2\delta}\bigr\|_{\ell_m^{\frac{p}{p-2}}}\bigl\| \|\widehat f\|_{L^2(I_m)}\bigr\|_{\ell_m^{p}}^2\\
&\lesssim \|f\|_{M_p^{0,2}}^2\leq\|f\|_{M_p^{s,2}}^2.\qedhere
\end{align*}\end{proof}

As a convergent geometric series is as large as its leading term, we are naturally interested in the term $\alpha_2$.  This was computed explicitly in \cite{KVZ18}:

\begin{lemma}[Lemma 4.2 of \cite{KVZ18}]\label{L:alpha2}
For $\varkappa >0 $ and $f$ a Schwartz function, we have
\begin{align}\label{alpha2}
\alpha_2(\varkappa;f)=2\varkappa\int_\mathbb{R}\frac{|\widehat{f}(\xi)|^2}{4\varkappa^2+\xi^2}\ d\xi.
\end{align}
\end{lemma}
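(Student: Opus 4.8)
The plan is to compute the trace defining $\alpha_2$ explicitly by passing to the Fourier side. The term $\alpha_2$ is the $j=1$ contribution to \eqref{alpha def}, namely $\alpha_2(\varkappa;f)=\Re\tr\bigl[(\varkappa-\partial)^{-1/2} f (\varkappa+\partial)^{-1}\bar f (\varkappa-\partial)^{-1/2}\bigr]$. First I would factor this operator as a product of two Hilbert--Schmidt operators (each of the form covered by Lemma~\ref{L:HS bdd}), so that the relevant products are trace class and the cyclicity of the trace is justified. Moving the trailing $(\varkappa-\partial)^{-1/2}$ to the front and combining the two half-powers then gives
\[
\alpha_2(\varkappa;f)=\Re\tr\bigl[(\varkappa-\partial)^{-1} f (\varkappa+\partial)^{-1}\bar f\bigr],
\]
which removes the fractional powers and leaves a clean product of two resolvents and two multiplication operators.

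Next I would evaluate this trace on the Fourier side. There the operators $(\varkappa\mp\partial)^{-1}$ act as the multipliers $(\varkappa\mp i\xi)^{-1}$, while multiplication by $f$ becomes convolution against $\widehat f$, with the normalization constant $(2\pi)^{-1/2}$ dictated by \eqref{FT}. Composing the four factors produces an integral kernel on the Fourier side; restricting to the diagonal, integrating, and using the identity $\widehat{\bar f}(\eta)=\overline{\widehat f(-\eta)}$, I expect to arrive at
\[
\tr\bigl[(\varkappa-\partial)^{-1} f (\varkappa+\partial)^{-1}\bar f\bigr]
=\frac{1}{2\pi}\iint \frac{|\widehat f(\xi-\zeta)|^2}{(\varkappa-i\xi)(\varkappa+i\zeta)}\,d\zeta\,d\xi.
\]
After the substitution $\xi=\zeta+\sigma$, the $\zeta$-integral decouples from $|\widehat f(\sigma)|^2$ as a one-dimensional resolvent integral $I(\sigma)=\int (\varkappa-i(\zeta+\sigma))^{-1}(\varkappa+i\zeta)^{-1}\,d\zeta$.

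The computational heart of the argument is then evaluating $I(\sigma)$. The integrand decays like $|\zeta|^{-2}$ at infinity and has simple poles at $\zeta=i\varkappa$ and $\zeta=-\sigma-i\varkappa$, which lie in opposite half-planes; a single residue computation (closing the contour in either half-plane) should yield $I(\sigma)=2\pi/(2\varkappa-i\sigma)$. Substituting back gives $\tr=\int |\widehat f(\sigma)|^2\,(2\varkappa-i\sigma)^{-1}\,d\sigma$, and taking the real part of $(2\varkappa-i\sigma)^{-1}=(2\varkappa+i\sigma)/(4\varkappa^2+\sigma^2)$ against the real weight $|\widehat f(\sigma)|^2$ isolates the factor $2\varkappa/(4\varkappa^2+\sigma^2)$ and produces exactly \eqref{alpha2}.

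The only genuine obstacle here is bookkeeping rather than any conceptual difficulty: one must track the Fourier normalization constants faithfully and confirm the trace-class property, so that the trace truly equals the integral of the diagonal kernel — both of which are guaranteed for Schwartz $f$ by the Hilbert--Schmidt bound of Lemma~\ref{L:HS bdd}. The contour integral for $I(\sigma)$ is the one place where a sign or half-plane error would silently propagate into the final answer, so I would verify it by computing the residues on both sides of the real axis and checking that the two evaluations agree.
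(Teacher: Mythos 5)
Your computation is correct: the cyclicity step is legitimately justified by factoring into two Hilbert--Schmidt pieces, the Fourier-side kernel and normalization are right, the residue evaluation gives $I(\sigma)=2\pi/(2\varkappa-i\sigma)$, and taking real parts yields exactly \eqref{alpha2}. The paper itself offers no proof of this lemma --- it is imported verbatim as Lemma~4.2 of \cite{KVZ18} --- and your direct trace computation on the Fourier side is essentially the derivation given in that reference.
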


To obtain a priori bounds in modulation spaces for the whole range of parameters  claimed in Theorem~\ref{T:main}, we need two further ingredients.  The first is an explicit computation of the quartic term in the definition of $\alpha$:

\begin{lemma}\label{L:quartic}
For $\varkappa >0 $ and $f$ a Schwartz function, we have
$$
\alpha_4(\varkappa;f)=\mp\Re\!\iiint \tfrac{[2\varkappa (\xi_1\xi_2+\xi_1\xi_4+\xi_2\xi_4)-8\varkappa^3]\overline{\widehat{f}(\xi_1)\widehat{f}(\xi_3)}\widehat{f}(\xi_2)\widehat{f}(\xi_4)}{2\pi(4\varkappa^2+\xi_1^2)(4\varkappa^2+\xi_2^2)(4\varkappa^2+\xi_4^2)}\, d\xi_1\,d\xi_2\, d\xi_3,
$$
where $\xi_4=\xi_1-\xi_2+\xi_3$.
\end{lemma}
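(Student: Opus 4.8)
The plan is to reduce the whole statement to the single trace $\tr\{K^2\}$ for $K=(\varkappa-\partial)^{-1/2}f(\varkappa+\partial)^{-1}\bar f(\varkappa-\partial)^{-1/2}$, since by definition $\alpha_4=\mp\tfrac12\Re\tr\{K^2\}$. First I would use cyclicity of the trace to merge the two half-powers at the ends, giving $\tr\{K^2\}=\tr\{(\varkappa-\partial)^{-1}f(\varkappa+\partial)^{-1}\bar f(\varkappa-\partial)^{-1}f(\varkappa+\partial)^{-1}\bar f\}$, a cyclic product of four Fourier multipliers (symbols $(\varkappa-i\xi)^{-1}$ and $(\varkappa+i\xi)^{-1}$) alternating with four multiplication operators, two by $f$ and two by $\bar f$.

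Next I would compute this trace on the Fourier side, where each multiplication operator becomes convolution against $\widehat f$ or $\widehat{\bar f}$ (with $\widehat{\bar f}(\zeta)=\overline{\widehat f(-\zeta)}$) and each multiplier acts diagonally. Writing the composite kernel and identifying its two endpoint variables yields a four-fold integral: a loop variable $p$ threading the trace together with three independent frequencies carried by the factors of $f$ and $\bar f$. A unit-Jacobian change of variables casts the four frequencies as $\overline{\widehat f(\xi_1)},\widehat f(\xi_2),\overline{\widehat f(\xi_3)},\widehat f(\xi_4)$ subject to $\xi_4=\xi_1-\xi_2+\xi_3$, with the four multipliers evaluated at the shifts $\varkappa-ip$, $\varkappa+i(p-\xi_2)$, and so on. Before proceeding, I would run the identical scheme on the linear term to recover $\alpha_2$ in Lemma~\ref{L:alpha2}, thereby fixing all conventions and constants.

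The core of the argument is the remaining $p$-integral, whose integrand is rational with four simple poles, two in each half-plane; after collecting the $\pm i$ prefactors (their product is $(-i)(i)(-i)(i)=1$) the two upper poles sit at $p=\xi_2+i\varkappa$ and $p=\xi_3+i\varkappa$. Closing upward and summing residues, the spurious factor $(\xi_2-\xi_3)^{-1}$ common to both residues cancels in their difference, leaving $\int\frac{dp}{\cdots}=\frac{-2\pi i\,[(\xi_2+\xi_4)+4i\varkappa]}{(\xi_1+2i\varkappa)(\xi_2+2i\varkappa)(\xi_3+2i\varkappa)(\xi_4+2i\varkappa)}$. Substituting back gives $\tr\{K^2\}$ as an explicit integral in $\widehat f(\xi_1),\dots,\widehat f(\xi_4)$ with a weight carrying all four complex factors $(\xi_j+2i\varkappa)^{-1}$.

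The last step, taking the real part, is where the real work lies. The density $\overline{\widehat f(\xi_1)\widehat f(\xi_3)}\widehat f(\xi_2)\widehat f(\xi_4)$ and the constraint $\xi_1+\xi_3=\xi_2+\xi_4$ are invariant under the order-eight group of frequency permutations respecting the conjugation pattern — the swaps $\xi_1\leftrightarrow\xi_3$ and $\xi_2\leftrightarrow\xi_4$ together with the interchange of the barred and unbarred pairs — and the pair-interchange carries the density to its complex conjugate. Using this I may symmetrize the weight and replace it by its real part; rationalizing $\prod_j(\xi_j+2i\varkappa)^{-1}$ and expanding in the elementary symmetric functions of $\xi_1,\dots,\xi_4$ then reduces the four real denominators to three and produces the claimed numerator $2\varkappa(\xi_1\xi_2+\xi_1\xi_4+\xi_2\xi_4)-8\varkappa^3$ over $(4\varkappa^2+\xi_1^2)(4\varkappa^2+\xi_2^2)(4\varkappa^2+\xi_4^2)$. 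I expect this reduction to be the main obstacle: the manifestly symmetric real form naturally retains all four factors $(4\varkappa^2+\xi_j^2)$, so arriving at the asserted asymmetric three-denominator representative requires exploiting the symmetry of the full quartic density (not merely of the weight) to absorb the difference, while carefully tracking the signs and the factors of $2\pi$ and $\tfrac12$. I would nail down all constants by symmetrizing the proposed weight over the eight-element group and testing it against the rationalized expression at special configurations such as $\xi_1=\xi_3=0$, $\xi_4=-\xi_2$.
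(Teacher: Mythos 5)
Your proposal is correct and follows essentially the same route as the paper: passing to Fourier variables, isolating a single loop variable by a unit-Jacobian change of coordinates, evaluating the loop integral by residues at the two poles in one half-plane (with the spurious $(\xi_2-\xi_3)^{-1}$ cancelling in the sum, exactly as in the paper's $A_1+A_3$), and then extracting the real part by symmetrizing over the same order-eight group $\xi_1\leftrightarrow\xi_3$, $\xi_2\leftrightarrow\xi_4$, $(\xi_1,\xi_3)\leftrightarrow(\xi_2,\xi_4)$. The final algebraic reduction you flag as the main obstacle is precisely the step the paper dismisses as a ``straightforward computation,'' so no new idea is missing.
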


\begin{proof}
This formula appeared in the proof of a priori bounds in $M^{0,2}_p(\R)$ for \eqref{nls} in \cite{Kla23}. It was derived by equating $\alpha_4$ with the real part of the second term in the series expansion of the logarithm of the inverse transmission coefficient as computed in \cite{KT18}. For the sake of completeness, we provide a proof via direct calculation.

In Fourier variables, we have
\begin{align*}
\alpha_4(\varkappa;f)&= \mp\Re\int\cdots\int\tfrac{\widehat{f}(\theta_1-\theta_4) \widehat{\overline{f}}(\theta_4-\theta_3)\widehat{f}(\theta_3-\theta_2)\widehat{\overline{f}}(\theta_2-\theta_1)}{8\pi^2(\varkappa-i\theta_1)(\varkappa+i\theta_4)(\varkappa-i\theta_3)(\varkappa+i\theta_2)}\,d\theta_1\, d\theta_2\, d\theta_3\, d\theta_4 \\
&= \mp\Re\int\cdots\int\tfrac{\widehat{f}(\theta_1-\theta_4) \overline{\widehat f(\theta_3-\theta_4)}\widehat{f}(\theta_3-\theta_2)\overline{\widehat f(\theta_1-\theta_2)}}{8\pi^2(\varkappa-i\theta_1)(\varkappa+i\theta_4)(\varkappa-i\theta_3)(\varkappa+i\theta_2)}\,d\theta_1\, d\theta_2\, d\theta_3\, d\theta_4.
\end{align*}
Changing variables to $\xi_1=\theta_1-\theta_2$, $\xi_2=\theta_3-\theta_2$, $\xi_3=\theta_3-\theta_4$ so that $\theta_2=\theta_1-\xi_1$, $\theta_3=\theta_1-\xi_1+\xi_2$, and $\theta_4=\theta_1-\xi_1+\xi_2-\xi_3$, we obtain
\begin{align}\label{alpha4f1}
\alpha_4(\varkappa;f)&= \mp\Re \int\cdots\int\tfrac{\widehat{f}(\xi_1-\xi_2+\xi_3)\overline{\widehat f(\xi_3)}\widehat{f}(\xi_2)
    \overline{\widehat f(\xi_1)}}{8\pi^2(\theta_1-a_1)(\theta_1-a_2)(\theta_1-a_3)(\theta_1-a_4)}\, d\theta_1\, d\xi_1\, d\xi_2\, d\xi_3
\end{align}
where $a_1=-i\varkappa$, $a_2=i\varkappa+\xi_1$, $a_3=-i\varkappa+\xi_1-\xi_2$, and $a_4=i\varkappa+\xi_1-\xi_2+\xi_3$.  As $a_1$ and $a_3$ belong to  the lower half of the complex plane when $\varkappa>0$, the Cauchy integral formula gives 
\begin{align*}
\int \tfrac{ d\theta_1}{(\theta_1-a_1)(\theta_1-a_2)(\theta_1-a_3)(\theta_1-a_4)}= - 2\pi i\, (A_1+A_3)
\end{align*}
where 
$$
A_1=\tfrac{1}{(a_1-a_2)(a_1-a_3)(a_1-a_4)} \quad\text{and}\quad  A_3=\tfrac{1}{(a_3-a_1)(a_3-a_2)(a_3-a_4)}
$$
denote the residues of the integrand at $a_1$ and $a_3$, respectively. The claim now follows from a straightforward computation combining this with \eqref{alpha4f1} and symmetrizing this expression with respect to the changes of variables $\xi_1\leftrightarrow \xi_3$, $\xi_2\leftrightarrow \xi_4$, and $(\xi_1,\xi_3)\leftrightarrow(\xi_2,\xi_4)$. 
\end{proof}

To derive the claimed a priori bounds in modulation spaces for the whole range of parameters in Theorem~\ref{T:main}, we also need a further refinement of $\alpha$.  Specifically, we define
\begin{equation}\label{beta defn}
\beta(\varkappa;f):=\alpha(\varkappa;f)-\tfrac 12 \alpha(2\varkappa;f)=:\sum_{j=1}^\infty \beta_{2j}(\varkappa;f).
\end{equation}
In view of Proposition~\ref{P:alpha conserved}, $\beta$ is also conserved by Schwartz solutions to \eqref{mkdv} and \eqref{nls}, provided condition \eqref{k large} is satisfied.
A direct computation using \eqref{alpha2} yields
\begin{align*}
\beta_{2}(\varkappa;f) = 24\varkappa^3\int_\mathbb{R}\frac{|\widehat{f}(\xi)|^2}{(4\varkappa^2+\xi^2)(16\varkappa^2+\xi^2)}\, d\xi.
\end{align*}
The key virtue of this differencing technique (introduced already in \cite{KVZ18} and used also in \cite{Kla23}), is the improved decay as $\xi\to \infty$ relative to the earlier \eqref{alpha2}.

To relate this quantity to the modulation norms, we employ \emph{Galilean boosts}. Given a solution $u(t,x)$ to \eqref{mkdv} and a wave number $k\in \R$, the function
\begin{align}\label{Galilei mkdv}
u^k(t,x):=e^{-ikx+2ik^3t}u(t,x-3k^2t)
\end{align}
solves the following mixed equation:
\begin{align}\tag{mKdV--NLS}\label{mkdv-nls}
\partial_t u^k+ \partial_x^3 u^k= \pm 6|u^k|^2\partial_x u^k- 3ik\partial_x^2 u^k\pm 6ik|u^k|^2u^k.
\end{align}
In general, the Galilei transformation of any member of the NLS--mKdV hierarchy involves all lower flows in the hierarchy.  For \eqref{nls}, this comprises just translation and phase rotation, so the Galilei symmetry takes a particularly simple form:
\begin{align}\label{Galilei nls}
u(t,x)\mapsto u^k(t,x):=e^{-ikx-ik^2t}u(t,x+2kt)
\end{align}
preserves the class of solutions to \eqref{nls}.

Because $\alpha$ is conserved by Schwartz solutions to both \eqref{mkdv} and \eqref{nls} (cf. Proposition~\ref{P:alpha conserved}), the quantities $\alpha(\varkappa;u^k)$ and $\beta(\varkappa; u^k)$ are conserved by Schwartz solutions to \eqref{mkdv-nls}. Observe also that in the setting of both \eqref{mkdv} and \eqref{nls}, we have
\begin{eqnarray*}
|\widehat{u^k}(t,\xi)| =|\widehat{u}(t,\xi+k)|
\end{eqnarray*}
and so
\begin{align}\label{beta2}
\beta_{2}\bigl(\varkappa; u^k(t)\bigr) = 24\varkappa^3\int_\mathbb{R}\frac{|\widehat u(t,\xi)|^2}{[4\varkappa^2+(\xi-k)^2][16\varkappa^2+(\xi-k)^2]}\, d\xi.
\end{align}

The following result relates the quadratic part of $\beta$ to the modulation norms.  To treat a priori bounds and equicontinuity on an equal footing in our arguments, we prove a slightly stronger statement:

\begin{lemma}\label{L:beta mod norm}
Fix $1\le p<\infty$ and $0\leq s<2-\frac1p$.  Let $\{c_k\}_{k\in\mathbb Z}$ be a sequence such that either $c_k\equiv 1$ or $c_k$ satisfies properties (i)--(iv) from Proposition~\ref{P:equi}.  Let $u_0$ be a Schwartz function such that \eqref{k large} holds for all $\varkappa\geq \frac12$ and let $u(t)$ denote the global solution to either \eqref{mkdv} or \eqref{nls} with initial data $u(0)=u_0$. Then uniformly for $t\in \R$ we have 
\begin{align}\label{equiv norms}
\bigl\|c_k\langle k\rangle^s \|\widehat u(t)\|_{L^2(I_k)} \bigr\|_{\ell^p_k} \simeq \bigl\|c_k\langle k\rangle^s \sqrt{\beta_{2}\bigl(\tfrac12; u^k(t)\bigr)} \bigr\|_{\ell^p_k}.
\end{align}
\end{lemma}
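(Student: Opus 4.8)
The plan is to read \eqref{equiv norms} as a purely harmonic-analytic, fixed-time statement. Fix $t$ and abbreviate $a_k:=\|\widehat u(t)\|_{L^2(I_k)}$, $b_k:=\sqrt{\beta_2(\tfrac12;u^k(t))}$, and $w_k:=c_k\langle k\rangle^s$; the claim is the weighted sequence-space equivalence $\|w_k a_k\|_{\ell^p_k}\simeq\|w_k b_k\|_{\ell^p_k}$, with implicit constants depending only on $s,p$ and the structural properties of $\{c_k\}$, but not on $u(t)$ or $t$. This $t$-independence is exactly what yields the asserted uniformity. Specializing \eqref{beta2} to $\varkappa=\tfrac12$ gives $b_k^2=3\int_\R\frac{|\widehat u(t,\xi)|^2}{[1+(\xi-k)^2][4+(\xi-k)^2]}\,d\xi$. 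Partitioning $\R=\bigsqcup_m I_m$ and noting that for $\xi\in I_m$ the weight is comparable to $\langle m-k\rangle^{-4}$ (with our convention $\langle x\rangle^2=4+|x|^2$), I would first record the discrete convolution identity $b_k^2\simeq\sum_{m\in\Z}\langle m-k\rangle^{-4}a_m^2$.

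The lower bound is then immediate: keeping only the $m=k$ term gives $a_k\lesssim b_k$ pointwise, hence $\|w_k a_k\|_{\ell^p}\lesssim\|w_k b_k\|_{\ell^p}$. The content is the reverse inequality. Writing $A_m:=w_m a_m$, I would reduce it, via the convolution identity, to boundedness on $\ell^{p/2}$ of the linear operator $g\mapsto\bigl(\sum_m\widetilde K_{km}g_m\bigr)_k$ on nonnegative sequences, where $\widetilde K_{km}:=\langle k-m\rangle^{-4}(w_k/w_m)^2$. Indeed $w_k^2 b_k^2\simeq\sum_m\widetilde K_{km}A_m^2$, so $\|w_k b_k\|_{\ell^p}^2\simeq\bigl\|\sum_m\widetilde K_{km}A_m^2\bigr\|_{\ell^{p/2}_k}$ and the target is $\lesssim\|A^2\|_{\ell^{p/2}}=\|w_k a_k\|_{\ell^p}^2$. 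Using the Peetre-type bound $\langle k\rangle\lesssim\langle k-m\rangle\langle m\rangle$, together with the facts extracted from (i)--(iv) of Proposition~\ref{P:equi} that $c_k$ is slowly varying (so $c_k\simeq c_m$ once $|k-m|\lesssim\langle m\rangle$, and $c_k/c_m\lesssim 1+\log\langle k\rangle$ in general), it suffices to treat the model kernel $L_{km}:=\langle k-m\rangle^{-4}(\langle k\rangle/\langle m\rangle)^{2s}$; the $c$-factors produce only poly-logarithmic corrections, harmless because every power-counting inequality below is strict.

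For $2\le p<\infty$, so $q:=p/2\ge1$, I would run a weighted Schur test with $h_k=\langle k\rangle^{-\gamma}$. After the standard estimate $\sum_m\langle k-m\rangle^{-4}\langle m\rangle^{-\beta}\simeq\langle k\rangle^{-\beta}$ (valid, with $\gamma$ chosen so the relevant exponents lie in the convergent range, and with the analogous statement governing the transposed sum), the two required bounds $\sum_m L_{km}h_m^{q'}\lesssim h_k^{q'}$ and $\sum_k L_{km}h_k^{q}\lesssim h_m^{q}$ collapse to the single pair of constraints $\gamma q'<4-2s$ and $\gamma q>2s-3$. A positive $\gamma$ meeting both exists exactly when $(2s-3)/q<(4-2s)/q'$; recalling $1/q=2/p$ and $1/q'=(p-2)/p$, this simplifies precisely to $s<2-\tfrac1p$. (At $p=2$, i.e. $q=1$, the test degenerates and one uses the plain column-sum bound $\sup_m\sum_k\widetilde K_{km}<\infty$, finite iff $s<\tfrac32=2-\tfrac12$.)

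For $1\le p<2$, so $q=p/2<1$, the Schur test is unavailable, and I would instead invoke the elementary inequality $\bigl(\sum_m x_m\bigr)^{q}\le\sum_m x_m^{q}$ for $x_m\ge0$, giving $\bigl\|\sum_m\widetilde K_{km}g_m\bigr\|_{\ell^{q}}^{q}\le\sum_m g_m^{q}\sum_k\widetilde K_{km}^{q}$ and reducing matters to $\sup_m\sum_k\widetilde K_{km}^{q}<\infty$. For the model kernel this sum is $\simeq\langle m\rangle^{-2sq}\sum_k\langle k-m\rangle^{-4q}\langle k\rangle^{2sq}$, whose convergence and $m$-uniformity hold exactly when $4q-2sq>1$, i.e. once again $s<2-\tfrac1p$; in both regimes the poly-logarithmic $c$-factors are absorbed because these thresholds are strict and the dominant near-diagonal terms satisfy $c_k\simeq c_m$. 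I expect the main obstacle to be attaining the \emph{sharp} range $s<2-\tfrac1p$ rather than the $s<\tfrac32$ that the crude $\ell^1/\ell^\infty$ endpoints of Schur's test would give: recovering the extra room forces the weighted test function (for $p\ge2$) and the quasi-norm summation (for $p<2$), and the exponent bookkeeping must be arranged to collapse to the stated threshold. The second delicate point is tracking the slowly varying $c_k$ uniformly, so that one set of constants serves both the $c_k\equiv1$ case (a priori bounds) and the genuine-$c_k$ case (equicontinuity).
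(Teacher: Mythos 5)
Your proof is correct and reaches the sharp range $s<2-\tfrac1p$, but it takes a genuinely different route from the paper. Both arguments share the trivial lower bound and the starting identity $\beta_2(\tfrac12;u^k)\simeq\sum_m\langle m-k\rangle^{-4}\|\widehat u\|_{L^2(I_m)}^2$. The paper then \emph{linearizes} via subadditivity of the square root, $\bigl(\sum_m x_m\bigr)^{1/2}\le\sum_m x_m^{1/2}$, trading half the kernel decay for a convolution estimate at the level of $\ell^p$: the near-diagonal part ($2|m|\ge|k|$, where $c_k\langle k\rangle^s\lesssim c_m\langle m\rangle^s$) is handled by Young's inequality with the $\ell^1$ kernel $\langle m-k\rangle^{-2}$, and the low-frequency part ($2|m|<|k|$, where $\langle m-k\rangle\simeq\langle k\rangle$) by H\"older in $m$, with the condition $c_k\langle k\rangle^{s-2}\in\ell^p_k$ producing exactly the threshold $s<2-\tfrac1p$. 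You instead keep the quadratic structure and prove $\ell^{p/2}$-boundedness of the kernel $\langle k-m\rangle^{-4}(w_k/w_m)^2$, via a weighted Schur test for $p\ge 2$ and $q$-subadditivity of the $\ell^{q}$ quasi-norm for $p<2$; your exponent bookkeeping ($\gamma q'<4-2s$, $\gamma q>2s-3$, resp.\ $q(4-2s)>1$) does collapse to $s<2-\tfrac1p$ in both regimes. What each buys: the paper's linearization keeps everything in genuine norms so a single Young-plus-H\"older argument covers all $1\le p<\infty$ at once, at the cost of the somewhat ad hoc low-frequency splitting; your version retains the full $\langle k-m\rangle^{-4}$ decay and is more systematic, but pays with the quasi-norm case split and the degenerate endpoint $p=2$. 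One small imprecision: as stated, ``$c_k\simeq c_m$ once $|k-m|\lesssim\langle m\rangle$'' is not quite right (take $k=0$ and $|m|$ large); what properties (i)--(iii) actually give, and what your argument needs, is the one-sided bound $c_k\lesssim c_m$ when $|k|\le 2|m|$, together with $c_k\le 1+\log(|k|+1)\lesssim_\epsilon\langle k-m\rangle^\epsilon$ in the complementary regime $|k|>2|m|$ where $\langle k-m\rangle\gtrsim\langle k\rangle$; since your thresholds are strict, this $\epsilon$-loss is indeed harmless, exactly as you anticipate.
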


\begin{remark}
If $c_k\equiv 1$, then $\text{LHS}\eqref{equiv norms}=\|u(t)\|_{M^{s,2}_p}$.  The scenario described in Lemma~\ref{L:beta mod norm} when $\{c_k\}_{k\geq 1}$ satisfies properties (i)--(iv) in Proposition~\ref{P:equi} will allow us to prove the equicontinuity statement in Theorem~\ref{T:main}.
\end{remark}

\begin{proof}
From \eqref{beta2} it is immediate that
$$
\beta_{2}\bigl(\tfrac12; u^k(t)\bigr)\gtrsim  \|\widehat u(t)\|_{L^2(I_k)}^2,
$$
and so  $\text{LHS}\eqref{equiv norms}\lesssim \text{RHS}\eqref{equiv norms}$.  On the other hand, using the subadditivity of fractional powers we may bound
\begin{align*}
\sqrt{\beta_{2}\bigl(\tfrac12; u^k(t)\bigr)} \simeq \Bigl\{\sum_{m\in \Z} \langle m-k\rangle^{-4}  \|\widehat u(t)\|_{L^2(I_m)}^2 \Bigr\}^{\frac12} \lesssim \sum_{m\in \Z} \langle m-k\rangle^{-2}  \|\widehat u(t)\|_{L^2(I_m)}.
\end{align*}
Thus, recalling the properties of the sequence $c_k$ and distinguishing the cases $2|m|\geq |k|$ and $2|m|<|k|$, we may bound
\begin{align}\label{9:28}
\bigl\|c_k\langle k\rangle^s \sqrt{\beta_{2}\bigl(\tfrac12; u^k(t)\bigr)} \bigr\|_{\ell^p_k}
&\lesssim \Bigl\|\sum_{|m|\geq \frac12|k|} c_k\langle k\rangle^s \langle m-k\rangle^{-2}\|\widehat u(t)\|_{L^2(I_m)} \Bigr\|_{\ell^p_k}\notag\\
&\quad+\Bigl\|\sum_{|m|< \frac12|k|} c_k\langle k\rangle^s \langle m-k\rangle^{-2}\|\widehat u(t)\|_{L^2(I_m)} \Bigr\|_{\ell^p_k}\notag\\
&\lesssim \Bigl\|\sum_{|m|\geq \frac12|k|}  \langle m-k\rangle^{-2} c_m\langle m\rangle^s\|\widehat u(t)\|_{L^2(I_m)} \Bigr\|_{\ell^p_k}\\
&\quad+\Bigl\|\sum_{|m|< \frac12|k|} c_k\langle k\rangle^{s-2} \|\widehat u(t)\|_{L^2(I_m)} \Bigr\|_{\ell^p_k}.\notag
\end{align} 
By Young's convolution inequality, we may bound the first term above as follows:
$$
\Bigl\|\sum_{m\in \Z}  \langle m-k\rangle^{-2} c_m\langle m\rangle^s\|\widehat u(t)\|_{L^2(I_m)} \Bigr\|_{\ell^p_k} \lesssim \bigl\|c_m\langle m\rangle^s\|\widehat u(t)\|_{L^2(I_m)} \bigr\|_{\ell^p_m},
$$
which is acceptable.

To bound the second term, we use H\"older and the properties of the sequence $c_k$:
\begin{align*}
\Bigl\|\sum_{|m|< \frac12|k|} &c_k\langle k\rangle^{s-2} \|\widehat u(t)\|_{L^2(I_m)} \Bigr\|_{\ell^p_k}\\
&\lesssim\Bigl\| c_k\langle k\rangle^{s-2}\bigl\|c_m\langle m\rangle^s\|\widehat u(t)\|_{L^2(I_m)} \bigr\|_{\ell^p_m}  \|\langle m\rangle^{-s}\|_{\ell_m^{p'}(|m|< \frac12|k|)} \Bigr\|_{\ell^p_k}\\
&\lesssim \bigl\|c_m\langle m\rangle^s\|\widehat u(t)\|_{L^2(I_m)} \bigr\|_{\ell^p_m} \Bigl\| c_k\langle k\rangle^{s-2}\max\bigl\{1,\langle k\rangle^{-s+\frac1{p'}}\log^{\frac1{p'}}(\langle k\rangle)\bigr\} \Bigr\|_{\ell^p_k}\\
&\lesssim \bigl\|c_m\langle m\rangle^s\|\widehat u(t)\|_{L^2(I_m)}\bigr\|_{\ell^p_m}.
\end{align*}
The presence of the term $\log^{\frac1{p'}}(\langle k\rangle)$ in the second inequality above is only necessary when $sp'=1$. Note that the sub-logarithmic growth of the sequence $c_k$ and our hypothesis $0\leq s<2-\frac1p$ guarantee that $c_k\langle k\rangle^{s-2}\in \ell^p_k$. Similarly, $c_k\langle k\rangle^{-2+\frac1{p'}}\log^{\frac1{p'}}(\langle k\rangle)\in \ell^p_k$ follows from the sub-logarithmic growth of the sequence $c_k$ and the fact that $-2+\frac{1}{p'}<-\frac1p$ for all $1\leq p<\infty$.
\end{proof}

\begin{remark}
It is essential in the proof above that we are working with the quadratic part of $\beta$, rather than that of $\alpha$.  Indeed, replacing $\beta_2$ with $\alpha_2$, we would have to replace the factor $\langle m-k\rangle^{-2}$ in \eqref{9:28} with $\langle m-k\rangle^{-1}$.  Consequently, the application of the Young convolution inequality would fail.  Similarly,  in the last step of the proof we would have to contend with the fact that the sequence $c_k\langle k\rangle^{-1+\frac1{p'}}\log^{\frac1{p'}}(\langle k\rangle)$ does not belong to $\ell^p$.
\end{remark}

\subsection{The scaling symmetry}\label{SS:tss}
The mKdV equation enjoys the following scaling symmetry: the mapping
$$
u(t,x)\mapsto u_\lambda(t,x):=\lambda^{-1} u(\lambda^{-3}t,\lambda^{-1}x), \qquad \lambda>0,
$$
preserves the class of solutions.  Similarly, \eqref{nls} enjoys the scaling symmetry
$$
u(t,x)\mapsto u_\lambda(t,x):=\lambda^{-1} u(\lambda^{-2}t,\lambda^{-1}x), \qquad \lambda>0.
$$

The next lemma shows how scaling interacts with the modulation norms:

\begin{lemma}\label{L:scaling}
Fix $1\leq p<\infty$ and $s\geq 0$.  For  $\lambda>0$ and $f\in M^{s,2}_p(\R)$, let $f_\lambda(x) = \lambda^{-1} f(\lambda^{-1}x)$.  Then
\begin{align}\label{scaling1}
\|f_\lambda\|_{M^{s,2}_p}\lesssim\langle\lambda\rangle^{-\min\{\frac12,\frac1p\}}\langle\lambda^{-1}\rangle^{s+\max\{\frac12,\frac1p\}}\|f\|_{M^{s,2}_p}.
\end{align}
%and 
%\begin{align}\label{scaling2}
%\|u(\lambda^{-3}t)\|_{M^{s,2}_p}\lesssim \lambda^{s+\max\{\frac12,\frac1p\}}\|u_\lambda(t)\|_{M^{s,2}_p}.
%\end{align}
\end{lemma}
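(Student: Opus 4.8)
The plan is to reduce the statement to the elementary scaling identity for the Fourier transform and then carry out a careful frequency-interval bookkeeping in the two regimes $\lambda\ge 1$ and $\lambda\le 1$ separately. Since $\widehat{f_\lambda}(\xi)=\widehat f(\lambda\xi)$, a change of variables gives the fundamental relation
\[
\|\widehat{f_\lambda}\|_{L^2(I_k)}=\lambda^{-\frac12}\|\widehat f\|_{L^2(\lambda I_k)},
\]
where $\lambda I_k=[\lambda(k-\tfrac12),\lambda(k+\tfrac12))$ has length $\lambda$. Writing $a_m:=\|\widehat f\|_{L^2(I_m)}$ and $b_m:=\langle m\rangle^s a_m$, so that $\|f\|_{M^{s,2}_p}=\|b_m\|_{\ell^p_m}$, the whole problem becomes that of estimating $\lambda^{-1/2}\bigl\|\langle k\rangle^s\|\widehat f\|_{L^2(\lambda I_k)}\bigr\|_{\ell^p_k}$ in terms of $\|b_m\|_{\ell^p_m}$.

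First I would treat $\lambda\ge1$ (so $\langle\lambda\rangle\simeq\lambda$ and $\langle\lambda^{-1}\rangle\simeq1$). Here each $\lambda I_k$ meets $S_k:=\{m:I_m\cap\lambda I_k\neq\emptyset\}$ with $|S_k|\lesssim\lambda$, while each fixed $m$ lies in at most two of the sets $S_k$. For $m\in S_k$ one has $|m/\lambda-k|\lesssim1$, hence $\langle k\rangle\simeq\langle m/\lambda\rangle\le\langle m\rangle$; this lets me replace the outer weight $\langle k\rangle^s$ by $\langle m\rangle^s$ and bound $\langle k\rangle^s\|\widehat f\|_{L^2(\lambda I_k)}\lesssim\bigl(\sum_{m\in S_k}b_m^2\bigr)^{1/2}$. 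It then remains to estimate $\bigl\|(\sum_{m\in S_k}b_m^2)^{1/2}\bigr\|_{\ell^p_k}$. For $p\le2$ the embedding $\ell^p\hookrightarrow\ell^2$ bounds the inner $\ell^2$ sum by the inner $\ell^p$ sum and, after summing using the bounded overlap in $k$, yields no loss; for $p\ge2$ one instead applies H\"older on the set $S_k$ (of size $\lesssim\lambda$), producing a factor $\lambda^{1/2-1/p}$. Combining with the prefactor $\lambda^{-1/2}$ gives $\lambda^{-1/2}$ when $p\le2$ and $\lambda^{-1/p}$ when $p\ge2$, i.e. exactly $\lambda^{-\min\{1/2,1/p\}}$.

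The regime $\lambda\le1$ (where $\langle\lambda\rangle\simeq1$ and $\langle\lambda^{-1}\rangle\simeq\lambda^{-1}$) is handled dually. Now each $\lambda I_k$ is contained in at most two unit intervals, while each $I_m$ meets $K_m:=\{k:\lambda I_k\cap I_m\neq\emptyset\}$ with $|K_m|\lesssim\lambda^{-1}$, and $\sum_{k\in K_m}\|\widehat f\|_{L^2(\lambda I_k)}^2\lesssim a_m^2$ with bounded overlap in $m$. The relevant weight comparison is $\langle k\rangle^s\lesssim\lambda^{-s}\langle\lambda k\rangle^s\simeq\lambda^{-s}\langle m\rangle^s$ (using $\langle\lambda k\rangle\ge\lambda\langle k\rangle$ and $|\lambda k-m|\lesssim1$), which extracts the factor $\lambda^{-s}$. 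Grouping the $\ell^p_k$ sum over the blocks $K_m$ and using $\ell^2\hookrightarrow\ell^p$ (for $p\ge2$, no loss) or H\"older on $K_m$ (for $p\le2$, giving $\lambda^{1/2-1/p}$) reduces everything to $\|b_m\|_{\ell^p_m}$. Collecting the prefactor $\lambda^{-1/2}$, the weight factor $\lambda^{-s}$, and the block factor yields $\lambda^{-s-1/2}$ for $p\ge2$ and $\lambda^{-s-1/p}$ for $p\le2$, that is $\lambda^{-s-\max\{1/2,1/p\}}$, as required.

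I expect the main obstacle to be the bookkeeping rather than any single hard estimate: one must keep the two weight comparisons straight ($\langle k\rangle\simeq\langle m/\lambda\rangle$ when $\lambda\ge1$ versus $\langle k\rangle\lesssim\lambda^{-1}\langle\lambda k\rangle$ when $\lambda\le1$), correctly count the interval overlaps near the endpoints of $\lambda I_k$, and---crucially---apply the $\ell^2$-versus-$\ell^p$ comparison in the right direction in each of the four sub-cases so that the counting factors $\lambda^{\pm(1/2-1/p)}$ assemble precisely into the advertised $\min$ and $\max$ exponents. The endpoint overlaps contribute only bounded multiplicative constants and are harmless, but they require a little care so that the partial pieces $\lambda I_k\cap I_m$ are not double-counted.
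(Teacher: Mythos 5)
Your proposal is correct and follows essentially the same route as the paper's proof: the same change of variables reducing to $\lambda^{-1/2}\bigl\|\langle k\rangle^s\|\widehat f\|_{L^2(\lambda\cdot I_k)}\bigr\|_{\ell^p_k}$, the same overlap sets with cardinalities $\lesssim\langle\lambda\rangle$ and $\lesssim\langle\lambda^{-1}\rangle$, and the same four-way case split ($\lambda\gtrless1$, $p\gtrless2$) using H\"older versus the $\ell^p$--$\ell^2$ embedding in each regime. The weight comparisons you state (in particular $\langle\lambda k\rangle\ge\lambda\langle k\rangle$ for $\lambda\le1$, extracting the $\lambda^{-s}$) are exactly what the paper uses, albeit left implicit there.
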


\begin{proof}
As
$$
\widehat{f_\lambda}(\xi)=\widehat{f}(\lambda \xi),
$$
performing a change of variables we get
\begin{align*}
\|f_\lambda\|_{M^{s,2}_p}
&=\bigl\|\langle k\rangle^s\|\widehat{f}(\lambda\xi)\|_{L^2_\xi(I_k)}\bigr\|_{\ell^p_k}
=\lambda^{-\frac 12}\bigl\|\langle  k\rangle^s\|\widehat{f}\|_{L^2(\lambda\cdot I_k)}\bigr\|_{\ell^p_k},
\end{align*}
where $\lambda\cdot I_k=[\lambda k-\lambda/2,\lambda k+\lambda/2)$.  The intersections between  $\lambda\cdot I_k$ and the unit intervals $I_m$ are reflected by the sets
$$
\mathcal{I}_{\lambda,k} :=\{m\in\mathbb{Z}: I_m\cap \lambda \cdot I_k\neq\emptyset\}
	\qtq{and} \mathcal{J}_{\lambda,m} :=\{k\in\mathbb{Z}: I_m\cap \lambda \cdot I_k\neq\emptyset\},
$$
which satisfy the following cardinality bounds:
$$
|\mathcal{I}_{\lambda,k}|\lesssim\langle\lambda\rangle \qtq{and} |\mathcal{J}_{\lambda,m}|\lesssim \langle\lambda^{-1}\rangle.
$$
From this point, we will treat the cases $\lambda\geq 1$ and $\lambda\leq 1$ separately.

Suppose $\lambda\geq 1$.   For $1\leq p\leq 2$, we use the subadditivity of fractional powers together with the easy estimate $\langle k\rangle\lesssim \langle m\rangle$ whenever $m\in \mathcal{I}_{\lambda,k}$ to bound
\begin{align*}
  \|f_\lambda\|_{M^{2,p}_s}
= \lambda^{-\frac12}\bigl\|\langle  k\rangle^s\|\widehat{f}\|_{L^2(\lambda\cdot I_k)}\bigr\|_{\ell^p_k}
&\lesssim\lambda^{-\frac12}\left\|\sqrt{\sum_{m\in \mathcal{I}_{\lambda,k}}\bigl\langle k\bigr\rangle^{2s} \|\widehat{f}\|^2 _{L^2(I_m)}}\, \right\|_{\ell^p_k}\\
&\lesssim\lambda^{-\frac12}\biggl\{\,\sum_{k\in\mathbb{Z}}\ \sum_{m\in \mathcal{I}_{\lambda,k}}\langle m\rangle^{ps} \|\widehat{f}\|^{p}_{L^2(I_m)}\biggr\}^{\frac1p}\\
&\lesssim \lambda^{-\frac12} \|f\|_{M^{s,2}_p},
\end{align*}
which yields \eqref{scaling1} in this case.  The last inequality relies on $|\mathcal{J}_{\lambda,m}| \lesssim 1$ when $\lambda\geq 1$.

For $2<p<\infty$, we use instead the H\"older inequality to estimate 
\begin{align*}
\|f_\lambda\|_{M^{s,2}_p}
&\lesssim \lambda^{-\frac12}\biggl\|\sum_{m\in \mathcal{I}_{\lambda,k}}\langle m\rangle^{2s} \|\widehat{f}\|^2 _{L^2(I_m)}\biggr\|^{1/2}_{\ell^{p/2}_k}\\
&\lesssim\lambda^{-\frac12}\biggl\|\,|\mathcal{I}_{\lambda,k}|^{1-\frac2p}\left\|\langle m\rangle^{2s} \|\widehat{f}\|^2_{L^2(I_m)}\right\|_{\ell^{p/2}_m(\mathcal{I}_{\lambda,k})}\biggr\|^{1/2}_{\ell^{p/2}_k}\\
&\lesssim\lambda^{-\frac12}\lambda^{\frac12-\frac1p}\biggl\|\left\|\langle m\rangle^{2s} \|\widehat{f}\|^2 _{L^2(I_m)}\right\|_{\ell^{p/2}_m(\mathcal{I}_{\lambda,k})}\biggr\|^{1/2}_{\ell^{p/2}_k}\\
&\lesssim  \lambda^{-\frac1p} \|f\|_{M^{s,2}_p},
\end{align*}
which implies \eqref{scaling1} in this case.

Suppose now that $\lambda\leq1$.  The central observation is that
\begin{align*}
\lambda^{-\frac12}\bigl\|\langle  k\rangle^s\|\widehat{f}\|_{L^2(\lambda\cdot I_k)}\bigr\|_{\ell^p_k(\mathcal J_{\lambda,m})}
&\lesssim \lambda^{-\frac12 - s} \langle  m\rangle^s \, | \mathcal J_{\lambda,m}|^{\max\{0,\frac1p-\frac12\}} \, \|\widehat{f}\|_{L^2(I_m)} \\
&\lesssim \lambda^{-\frac12 - s-\max\{0,\frac1p-\frac12\}} \,  \langle  m\rangle^s\|\widehat{f}\|_{L^2(I_m)}.
\end{align*}
For $2\leq p<\infty$, this follows from the embedding $\ell^2\hookrightarrow\ell^p$.  For $1\leq p<2$, one uses H\"older's inequality.  To finish the proof of \eqref{scaling1}, it remains to take the $\ell_m^p$ norm of both sides.
\end{proof}

Combining Lemmas~\ref{L:M controls HS} and \ref{L:scaling}, we see that given any Schwartz solution $u(t)$ to \eqref{mkdv} or \eqref{nls}, we may employ the scaling symmetries of these equations to guarantee that \eqref{k large} holds for $u_\lambda(0)$ for all $\varkappa\geq \frac12$.   Indeed, it is apparent from \eqref{scaling1} that as $\lambda\to \infty$, the norm of the rescaled initial data converges to zero.  In particular, Lemma~\ref{L:beta mod norm} applies to the rescaled solution $u_\lambda(t)$ when $\lambda$ is large.

Further, it is easy to see that scaling preserves the equicontinuity property.  Specifically, if $Q\subset M^{s,2}_p(\R)$ is a bounded and equicontinuous set and $\lambda>0$ is fixed, then the set $Q_\lambda:=\{ f_\lambda (x)= \lambda^{-1} f( \lambda^{-1}x):\, f\in Q\}$ is also bounded and equicontinuous in $M^{s,2}_p(\R)$.

\section{A priori bounds and equicontinuity}
\label{a priori equi}

The goal of this section is to prove Theorem~\ref{T:main}.  Fix $u_0\in \mathcal S(\mathbb\R)$ and let $u(t)$ denote the global solution to either \eqref{mkdv} or \eqref{nls} with initial data $u(0)=u_0$.  As we will explain more fully at the end of this section, by employing the scaling symmetries of \eqref{mkdv} and \eqref{nls} together with Lemma~\ref{L:M controls HS}, it suffices to prove Theorem~\ref{T:main} for small initial data.  Specifically, we will first establish the claim under the assumption 
\begin{align}\label{small norm data}
\|u_0\|_{M^{s,2}_p}\leq \varepsilon
\end{align}
for some small parameter $\varepsilon>0$ to be chosen later.  By the time-continuity of the \eqref{mkdv} and \eqref{nls} flows in Schwartz space, there exists an open interval $I$ containing $0$ such that
\begin{align}\label{small norm}
\sup_{t\in I} \,\|u(t)\|_{M^{s,2}_p}\leq 2C_0\varepsilon,
\end{align}
where $C_0\geq 1$ is a fixed universal constant that will subsume all implicit constants in the computations that follow.

In view of Lemma~\ref{L:M controls HS}, taking $\varepsilon=\varepsilon(C_0,\eta)$ small we may ensure that
\begin{align}\label{smalltail}
\sup_{t\in I }\, \int \log\bigl(4 + \tfrac{\xi^2}{\varkappa^2}\bigr)\frac{|\widehat u(t,\xi)|^2}{\sqrt{4\varkappa^2 + \xi^2}} \,d\xi <\eta
\end{align}
for all $\varkappa\geq \frac12$, where $\eta$ is as dictated by Proposition~\ref{P:alpha conserved}.

To treat a priori bounds and equicontinuity concomitantly, we let $\{c_k\}_{k\in\mathbb Z}$ be a sequence such that either $c_k\equiv 1$ or $c_k$ satisfies properties (i)---(iv) from Proposition~\ref{P:equi}.  To keep formulas within margins, we adopt the notation
$$
\beta_{\geq 2\ell}(\varkappa;f) := \sum_{j=\ell}^\infty \beta_{2j}(\varkappa;f).
$$

In view of \eqref{tailbound} and \eqref{smalltail}, for all $k\in\Z$ we have 
\begin{align}\label{beta tail}
\bigl|\beta_{\geq 2\ell}\bigl(\tfrac12;u^k(t)\bigr)\bigr|&\lesssim \biggl\{\int_\R \frac{\log(\langle \xi-k\rangle)}{\langle \xi-k\rangle} \, |\widehat u(t,\xi)|^2\,d\xi\biggr\}^\ell,
\end{align}
where $u^k$ denotes the Galilei boosted solution as defined in \eqref{Galilei mkdv} and \eqref{Galilei nls} for \eqref{mkdv} and \eqref{nls}, respectively.

Using Lemma~\ref{L:beta mod norm} and the conservation of $\beta$, we may bound
\begin{align}
\bigl\|c_k\langle k\rangle^s\|\widehat{u}(t,\xi)\|_{L^2_\xi(I_k)}\bigr\|_{\ell_k^p}
&\lesssim \Bigl\|c_k\langle k\rangle^{s}\sqrt{\beta\bigl(\tfrac12; u^k(t)\bigr)}\Bigr\|_{\ell_k^{p}}
+\Bigl\|c_k\langle k\rangle^{s}\sqrt{\beta_{\geq 4}\bigl(\tfrac12; u^k(t)\bigr)}\Bigr\|_{\ell_k^{p}}\notag\\
& \lesssim \bigl\|c_k\langle k\rangle^s\|\widehat{u}(0,\xi)\|_{L^2_\xi(I_k)}\bigr\|_{\ell_k^p}
+\Bigl\|c_k\langle k\rangle^{s}\sqrt{\beta_{\geq 4}\bigl(\tfrac12; u^k(0)\bigr)}\Bigr\|_{\ell_k^{p}}\label{3:33}\\
&\quad +\Bigl\|c_k\langle k\rangle^{s}\sqrt{\beta_{\geq 4}\bigl(\tfrac12; u^k(t)\bigr)}\Bigr\|_{\ell_k^{p}}.\notag
\end{align}

In the next two lemmas, we bound the contribution of the quartic and higher order terms in the expression above.  To cover the whole range of the parameters in Theorem~\ref{T:main}, we have to distinguish the quartic terms from the sextic and higher order terms  in our analysis.  We begin with $\beta_{\geq 6}$.

\begin{lemma}\label{R6}
Fix $1\leq p<\infty$ and $0\leq s<\frac32-\frac1p$.  Then 
\begin{align*}
\Bigl\|c_k\langle k\rangle^{s}\sqrt{\beta_{\geq 6}\bigl(\tfrac12; u^k(t)\bigr)}\Bigr\|_{\ell_k^{p}}\lesssim \bigl\|c_k\langle k\rangle^{s}\|\widehat{u}(t,\xi)\|_{L_\xi^2(I_k)}\bigr\|_{\ell_k^p}^3
\end{align*}
uniformly for $t\in I$.
\end{lemma}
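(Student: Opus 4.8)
The plan is to feed the crude tail bound \eqref{beta tail} with $\ell=3$ into the estimate: this gives
\[
\sqrt{\beta_{\ge6}\bigl(\tfrac12;u^k(t)\bigr)}\lesssim G_k^{3/2},\qquad G_k:=\int_\R\frac{\log\langle\xi-k\rangle}{\langle\xi-k\rangle}\,|\widehat u(t,\xi)|^2\,d\xi .
\]
Discretizing over the unit intervals $I_m$, on which $\tfrac{\log\langle\xi-k\rangle}{\langle\xi-k\rangle}\simeq w_{m-k}$ with $w_j:=\tfrac{\log\langle j\rangle}{\langle j\rangle}$, this reads $G_k\simeq\sum_m w_{m-k}a_m^2$ with $a_m:=\|\widehat u(t)\|_{L^2(I_m)}$. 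So it suffices to bound $\|c_k\langle k\rangle^s G_k^{3/2}\|_{\ell^p_k}$ by $\|b\|_{\ell^p}^3$, where $b_m:=c_m\langle m\rangle^s a_m$.

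The main obstacle, and the feature that dictates the whole approach, is that $w$ decays only like $\langle\cdot\rangle^{-1}$. Consequently the naive linearization $G_k^{1/2}\le\sum_m w_{m-k}^{1/2}a_m$ via subadditivity of the square root is hopelessly lossy: it convolves against the non-summable kernel $\langle\cdot\rangle^{-1/2}$ and discards all the square-function cancellation (for data spread over $N$ frequencies it overestimates $G_k^{1/2}$ by a power of $N$). I would therefore keep the $\ell^2$-structure of $G_k$ intact. Since
\[
\bigl\|c_k\langle k\rangle^s G_k^{3/2}\bigr\|_{\ell^p_k}=\bigl\|c_k^{1/3}\langle k\rangle^{s/3}G_k^{1/2}\bigr\|_{\ell^{3p}_k}^3,
\]
it is enough to prove the weighted square-function bound $\|c_k^{1/3}\langle k\rangle^{s/3}G_k^{1/2}\|_{\ell^{3p}_k}\lesssim\|b\|_{\ell^p}$. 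The outer exponent $3p\ge3$ is a genuine norm for every $1\le p<\infty$, so—unlike a reduction that passes through $\ell^{p/2}$—this step handles all $p$ at once.

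I would then pull the weight inside the square root and redistribute it onto the summation index using $\langle k\rangle^{2s/3}\lesssim\langle m\rangle^{2s/3}\langle k-m\rangle^{2s/3}$ together with the elementary consequence $c_k\lesssim c_m\bigl(1+\log\langle k-m\rangle\bigr)$ of properties (i)--(ii) in Proposition~\ref{P:equi} (monotonicity when $|k|\le|m|$, and iteration of $c_{4|k|}\le c_{|k|}+1$ when $|k|>|m|$). Substituting $a_m=b_m/(c_m\langle m\rangle^s)$ and discarding the favorable factors $c_m,\langle m\rangle\ge1$ reduces everything to the positive-kernel estimate
\[
\Bigl\|\sum_m N_{k,m}\,e_m\Bigr\|_{\ell^{3p/2}_k}\lesssim\|e\|_{\ell^{p/2}},\qquad e_m:=b_m^2,\quad N_{k,m}\lesssim\langle m\rangle^{-\frac{4s}3}\langle k-m\rangle^{\frac{2s}3-1}\bigl(1+\log\langle k-m\rangle\bigr)^{5/3},
\]
where $\|e\|_{\ell^{p/2}}=\|b\|_{\ell^p}^2$ (the positivity of $e$ means the quasi-norm for $p<2$ causes no difficulty).

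Finally I would prove this kernel bound by a Hardy--Littlewood--Sobolev/Young estimate for the convolution factor $\langle\cdot\rangle^{\frac{2s}3-1}$, combined with Hölder against the extra decay $\langle m\rangle^{-4s/3}$, splitting into the regime $s\le 2/p$ (where the $\langle m\rangle^{-4s/3}$ factor is superfluous) and $2/p<s<\tfrac32-\tfrac1p$ (where it is essential). In both regimes the decisive requirement is $\langle\cdot\rangle^{-(1-2s/3)}\in\ell^{3p/2}$, i.e. $\bigl(1-\tfrac{2s}3\bigr)\tfrac{3p}2>1$, which is exactly the hypothesis $s<\tfrac32-\tfrac1p$; the logarithmic factors are harmless because the inequality is strict. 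This is the three-half-powers-of-decay analogue of Lemma~\ref{L:beta mod norm}, where the single requirement $s<2-\tfrac1p$ played the corresponding role.
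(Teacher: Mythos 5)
Your argument is correct and follows the paper's proof in all essentials: the same tail bound \eqref{beta tail} with $\ell=3$, the same rearrangement of the outer exponent (your $\|\cdot\|_{\ell^{3p}_k}^3$ of the square root is identical to the paper's $\|\cdot\|_{\ell^{3p/2}_k}^{3/2}$ of the quantity $G_k$ itself), a transfer of the weight $c_k\langle k\rangle^s$ from the index $k$ to the index $m$ at the cost of factors of $k-m$, and Young/H\"older estimates in which the hypothesis $s<\tfrac32-\tfrac1p$ enters exactly where you say it does. The only organizational difference is that you split cases according to the size of $s$ relative to $2/p$ after a Peetre-type redistribution of the weights, whereas the paper splits the sum into the regions $2|m|\geq|k|$ and $2|m|<|k|$; these lead to the same computations, the one small correction being that the borderline case $s=2/p$ with $p>2$ must be absorbed into your H\"older regime rather than the Young regime, since the logarithmic factor destroys the summability of $\langle\cdot\rangle^{(2s/3-1)\cdot 3p/(3p-4)}$ exactly there.
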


\begin{proof}
Using \eqref{beta tail}, recalling the properties of the sequence $c_k$, and separating the cases $2|m|\geq |k|$ and $2|m|<|k|$, we may bound
\begin{align}\label{1:41}
\Bigl\|c_k\langle k\rangle^{s}\sqrt{\beta_{\geq 6}\bigl(\tfrac12; u^k(t)\bigr)}\Bigr\|_{\ell_k^{p}}
&\lesssim \biggl\|\sum_{m\in \Z} c_k^{\frac23}\langle k\rangle^{\frac{2s}3} \tfrac{\log(\langle m-k\rangle)}{\langle m-k\rangle} \|\widehat u(t)\|_{L^2(I_m)}^2\biggr\|_{\ell_k^{\frac{3p}2}}^{\frac32}\notag\\
&\lesssim \biggl\|\sum_{|m|\geq \frac12|k|} \tfrac{\log(\langle m-k\rangle)}{\langle m-k\rangle} c_m^{2}\langle m\rangle^{2s} \|\widehat u(t)\|_{L^2(I_m)}^2\biggr\|_{\ell_k^{\frac{3p}2}}^{\frac32}\\
&\quad+ \biggl\|\sum_{|m|< \frac12|k|}c_k^{\frac23}\langle k\rangle^{\frac{2s}3 -1} \log(\langle k\rangle) \|\widehat u(t)\|_{L^2(I_m)}^2\biggr\|_{\ell_k^{\frac{3p}2}}^{\frac32}.\notag
\end{align}

We first consider the case $1\leq p\leq 2$.  Using the fact that $\ell^p\hookrightarrow\ell^2$, we may bound 
\begin{align*}
\biggl\|\sum_{|m|\geq \frac12|k|}& \tfrac{\log(\langle m-k\rangle)}{\langle m-k\rangle} c_m^{2}\langle m\rangle^{2s} \|\widehat u(t)\|_{L^2(I_m)}^2\biggr\|_{\ell_k^{\frac{3p}2}}\\
&\lesssim \sum_{m\in \Z} c_m^{2}\langle m\rangle^{2s} \|\widehat u(t)\|_{L^2(I_m)}^2 \bigl\| \tfrac{\log(\langle k\rangle)}{\langle k\rangle}\bigr\|_{\ell_k^{\frac{3p}2}}
\lesssim \bigl\|  c_m\langle m\rangle^{s} \|\widehat u(t)\|_{L^2(I_m)} \bigr\|_{\ell_m^p}^2
\end{align*}
and
\begin{align*}
\biggl\|\sum_{|m|< \frac12|k|}  c_k^{\frac23}\langle k\rangle^{\frac{2s}3 -1} \log(\langle k\rangle) &\|\widehat u(t)\|_{L^2(I_m)}^2\biggr\|_{\ell_k^{\frac{3p}2}}\\
&\lesssim  \sum_{m\in \Z}\|\widehat u(t)\|_{L^2(I_m)}^2 \bigl\|c_k^{\frac23}\langle k\rangle^{\frac{2s}3 -1} \log(\langle k\rangle)\bigr\|_{\ell_k^{\frac{3p}2}}\\
&\lesssim \sum_{m\in \Z} c_m^{2}\langle m\rangle^{2s} \|\widehat u(t)\|_{L^2(I_m)}^2\\
&\lesssim  \bigl\| c_m\langle m\rangle^{s} \|\widehat u(t)\|_{L^2(I_m)} \bigr\|_{\ell_m^p}^2.
\end{align*}
To obtain the second inequality above, we used that $c_m^{2}\langle m\rangle^{2s} \geq 1$ for all $m\in \Z$ together with the fact $c_k^{\frac23}\langle k\rangle^{\frac{2s}3 -1} \log(\langle k\rangle)\in \ell_k^{\frac{3p}2}$ whenever  $s<\frac32-\frac1p$.

We now turn to the case $2<p<\infty$.  Using Young's convolution inequality, we may bound
\begin{align*}
\biggl\|\sum_{|m|\geq \frac12|k|}& \tfrac{\log(\langle m-k\rangle)}{\langle m-k\rangle} c_m^{2}\langle m\rangle^{2s} \|\widehat u(t)\|_{L^2(I_m)}^2\biggr\|_{\ell_k^{\frac{3p}2}}\\
&\lesssim \bigl\|  c_m^{2}\langle m\rangle^{2s} \|\widehat u(t)\|_{L^2(I_m)}^2 \bigr\|_{\ell_m^{\frac p2}} \bigl\| \tfrac{\log(\langle m\rangle)}{\langle m\rangle}\bigr\|_{\ell_m^{\frac{3p}{3p-4}}}
\lesssim \bigl\|  c_m\langle m\rangle^{s} \|\widehat u(t)\|_{L^2(I_m)} \bigr\|_{\ell_m^p}^2,
\end{align*}
which provides an acceptable bound for the first term on the right-hand side of \eqref{1:41}.  Considering the second term, we use the H\"older inequality to get
\begin{align*}
 &\biggl\|\sum_{|m|< \frac12|k|} c_k^{\frac23}\langle k\rangle^{\frac{2s}3 -1} \log(\langle k\rangle) \|\widehat u(t)\|_{L^2(I_m)}^2\biggr\|_{\ell_k^{\frac{3p}2}}\\
&\lesssim  \biggl\| c_k^{\frac23}\langle k\rangle^{\frac{2s}3 -1}\log(\langle k\rangle)  \bigl\|  c_m^{2}\langle m\rangle^{2s} \|\widehat u(t)\|_{L^2(I_m)}^2 \bigr\|_{\ell_m^{\frac p2}}\|\langle m\rangle^{-2s}\|_{\ell_m^{\frac{p}{p-2}}(|m|< \frac12|k|)}\biggr\|_{\ell_k^{\frac{3p}2}}\\
&\lesssim  \bigl\|  c_m\langle m\rangle^{s} \|\widehat u(t)\|_{L^2(I_m)} \bigr\|_{\ell_m^p}^2  \Bigl\|c_k^{\frac23}\langle k\rangle^{\frac{2s}3 -1}\!\log(\langle k\rangle) \max\bigl\{1,\! \langle k\rangle^{-2s+\frac{p-2}p}\!\log^{\frac{p-2}{p}}(\langle k\rangle)\bigr\} \Bigr\|_{\ell_k^{\frac{3p}2}}\\
&\lesssim  \bigl\|  c_m\langle m\rangle^{s} \|\widehat u(t)\|_{L^2(I_m)} \bigr\|_{\ell_m^p}^2.
\end{align*}
To derive the last inequality, we used the sub-logarithmic growth of the sequence $c_k$ and the condition $0\leq s<\frac32-\frac1p$.  The term $\log^{\frac{p-2}{p}}(\langle k\rangle)$ in the second inequality above is only necessary when $2s\frac{p}{p-2}=1$, that is, $s=\frac12-\frac1p$.
\end{proof}

\begin{remark} It is important in the proof above that we are working with the sextic and higher order terms in $\beta$.  If we replaced $\beta_{\geq 6}$ by $\beta_{\geq 4}$, then the analogue of \eqref{1:41} would be
\begin{align}\label{1:61}
\Bigl\|c_k\langle k\rangle^{s}\sqrt{\beta_{\geq 4}\bigl(\tfrac12; u^k(t)\bigr)}\Bigr\|_{\ell_k^{p}}
&\lesssim \biggl\|\sum_{m\in \Z} c_k\langle k\rangle^s\tfrac{\log(\langle m-k\rangle)}{\langle m-k\rangle} \|\widehat u(t)\|_{L^2(I_m)}^2\biggr\|_{\ell_k^p}\notag\\
&\lesssim \biggl\|\sum_{|m|\geq \frac12|k|} \tfrac{\log(\langle m-k\rangle)}{\langle m-k\rangle} c_m^{2}\langle m\rangle^{2s} \|\widehat u(t)\|_{L^2(I_m)}^2\biggr\|_{\ell_k^p}\\
&\quad+ \biggl\|\sum_{|m|< \frac12|k|}c_k\langle k\rangle^{s -1} \log(\langle k\rangle) \|\widehat u(t)\|_{L^2(I_m)}^2\biggr\|_{\ell_k^p}.\notag
\end{align}
Thus, arguing as in the proof of Lemma~\ref{R6}, we encounter two changes: (i) We incur a logarithmic divergence for the first term on the right-hand side of \eqref{1:61} when $p=1$.
(ii) Bounding the second term on the right-hand side of \eqref{1:61} for $1< p<\infty$ requires the more stringent condition $0\leq s<1-\frac1p$.  This restriction would confine our analysis to the regime treated already in \cite{OW20}.

In particular, the arguments of Lemma~\ref{R6} would only yield
\begin{align}\label{1:81}
\Bigl\|c_k\langle k\rangle^{s}\sqrt{\beta_{\geq 4}\bigl(\tfrac12; u^k(t)\bigr)}\Bigr\|_{\ell_k^{p}}\lesssim \bigl\|c_k\langle k\rangle^{s}\|\widehat{u}(t,\xi)\|_{L_\xi^2(I_k)}\bigr\|_{\ell_k^p}^2
\end{align}
uniformly for $t\in I$, whenever $1<p<\infty$ and $0\leq s<1-\frac1p$.
\end{remark}

We now turn to the contribution to \eqref{3:33} of the quartic terms in $\beta$.  To control this for all $0\leq s<\frac32-\frac1p$, we rely on the explicit formula provided by Lemma~\ref{L:quartic}.

\begin{lemma}\label{L:beta4 small p}
Fix $1\leq p<\infty$ and $0\leq s<\frac32-\frac1p$.  Then 
\begin{align*}
\Bigl\|c_k\langle k\rangle^{s}\sqrt{\beta_4\bigl(\tfrac12; u^k(t)\bigr)}\Bigr\|_{\ell_k^{p}}\lesssim \bigl\|c_k\langle k\rangle^{s}\|\widehat{u}(t,\xi)\|_{L_\xi^2(I_k)}\bigr\|_{\ell_k^p}^2
\end{align*}
uniformly for $t\in I$.
\end{lemma}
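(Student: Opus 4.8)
The plan is to exploit the explicit formula for $\alpha_4$ from Lemma~\ref{L:quartic}, since the crude Hilbert--Schmidt bound \eqref{tailbound} is, as explained in the remark following Lemma~\ref{R6}, only strong enough to reach $s<1-\frac1p$. First I would insert $f=u^k$ into Lemma~\ref{L:quartic} and form the quartic part of the difference \eqref{beta defn}, namely $\beta_4=\alpha_4(\tfrac12;\cdot)-\tfrac12\alpha_4(1;\cdot)$. Writing $\widehat{u^k}(\xi)=e^{i\theta(\xi,k,t)}\widehat u(\xi+k)$ for the Galilei boosts \eqref{Galilei mkdv} and \eqref{Galilei nls}, the four phases enter with the conjugation pattern of the integrand and cancel identically thanks to the frequency constraint $\xi_1-\xi_2+\xi_3-\xi_4=0$. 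After the change of variables $\eta_i=\xi_i+k$, this exhibits $\beta_4(\tfrac12;u^k)$ as a genuine $4$-linear form in $|\widehat u|$ with a multiplier $m^\beta(\eta_1-k,\eta_2-k,\eta_4-k)$, where $m^\beta=m_{1/2}-\tfrac12 m_1$ and $m_\varkappa$ denotes the symbol appearing in Lemma~\ref{L:quartic}.

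The heart of the argument is a pointwise estimate on $m^\beta$. Here the differencing plays exactly the role it played in passing from \eqref{alpha2} to \eqref{beta2}: the symbol $m_\varkappa$ decays only like degree $-4$ in its three arguments, but its top-order homogeneous part scales out of the combination $m_{1/2}-\tfrac12 m_1$. I would make this precise by clearing denominators and checking that the leading (degree $6$) part of the numerator of $m^\beta$ cancels, leaving a rational symbol that decays like degree $-6$; a monomial count then shows that each of the three arguments carries at least one power of decay, so that $|m^\beta(a,b,d)|\lesssim\sum\langle a\rangle^{-\alpha}\langle b\rangle^{-\beta}\langle d\rangle^{-\gamma}$ with $\alpha,\beta,\gamma\geq1$ and $\alpha+\beta+\gamma=6$. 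This is the two-orders-of-decay improvement over the crude bound \eqref{beta tail} that ultimately upgrades $1-\frac1p$ to $\frac32-\frac1p$.

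With the multiplier estimate in hand, I would discretize in the spirit of Lemmas~\ref{L:beta mod norm} and~\ref{R6}: decompose each $\eta_i$ into the unit blocks $I_{m_i}$, replace $m^\beta$ by its supremum over the relevant product of blocks (comparable, by its slow variation, to its value at the centers), and use Cauchy--Schwarz through the constraint $m_4=m_1-m_2+m_3$ to replace the restricted integrals by the product $\prod_i\|\widehat u\|_{L^2(I_{m_i})}$. This reduces matters to bounding a weighted $\ell^{p/2}_k$ norm of a discrete $4$-linear sum $\sum_{m_1,m_2,m_3}\mu_k\prod_i\|\widehat u\|_{L^2(I_{m_i})}$, where $\mu_k\lesssim\sum\langle m_1-k\rangle^{-\alpha}\langle m_2-k\rangle^{-\beta}\langle m_4-k\rangle^{-\gamma}$.

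Finally I would carry out the summation exactly as in Lemmas~\ref{L:beta mod norm} and~\ref{R6}: pass the weights $c_k\langle k\rangle^s$ onto the frequencies using properties (i)--(iv) of $\{c_k\}$, split according to whether the frequencies are comparable to $k$ or much smaller than $k$, and apply Young's convolution inequality on the comparable piece and H\"older's inequality on the low-frequency piece. The threshold $0\le s<\frac32-\frac1p$ is precisely what renders the residual weight sequence produced by the low-frequency piece summable; this is where the extra decay of $m^\beta$ is spent, and it matches the constraint already encountered in Lemma~\ref{R6}. I expect the main obstacle to be the multiplier estimate together with the bookkeeping in this last step: one must distribute the six orders of decay among the three active frequencies so that the convolution and H\"older sums close simultaneously for all $1\le p<\infty$ and uniformly over both admissible choices of $\{c_k\}$, namely the constant sequence and the sub-logarithmic equicontinuity weights.
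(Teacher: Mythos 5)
Your overall strategy --- replace the crude Hilbert--Schmidt bound by the explicit quartic formula of Lemma~\ref{L:quartic}, discretize into unit blocks, and redo the weight-shifting/Young/H\"older bookkeeping of Lemmas~\ref{L:beta mod norm} and~\ref{R6} --- is the paper's strategy. But your key analytic input is genuinely different. The paper never exploits the differencing at quartic order: it simply bounds $|\beta_4|\leq|\alpha_4(\tfrac12;\cdot)|+\tfrac12|\alpha_4(1;\cdot)|$ and uses the degree $-4$ multiplier of $\alpha_4$ itself, whose numerator $\xi_1\xi_2+\xi_1\xi_4+\xi_2\xi_4$ distributes the decay asymmetrically as $\langle\xi_1-k\rangle^{-1}\langle\xi_2-k\rangle^{-1}\langle\xi_4-k\rangle^{-2}$ with \emph{nothing} on $\xi_3$; the gain over \eqref{tailbound} is then extracted by H\"older, putting $\widehat u(\xi_3)$ in $L^2$ and the three weighted factors in $L^{4/3}_\xi$, which produces exactly the weight $\langle k\rangle^{8s/9-4/3}\in\ell^{9p/8}_k$, i.e.\ the threshold $s<\tfrac32-\tfrac1p$. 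Your observation that the top-order homogeneous part of $m_\varkappa$ is linear in $\varkappa$ and hence cancels in $m_{1/2}-\tfrac12 m_1$, leaving a symbol of total degree $-6$ with at least one power on each of $\xi_1,\xi_2,\xi_4$, is correct (the degree-$8$ part of $(e_2-1)D_1-(e_2-4)D_{1/2}$ vanishes and the surviving degree-$6$ monomials such as $\xi_1^3\xi_2^3$ still carry a full power of each variable after division by the degree-$12$ denominator). This buys strictly more decay than the paper uses and would, on the low-frequency piece, close for $s$ well beyond $\tfrac32-\tfrac1p$; it is a viable and arguably cleaner route to the multiplier estimate, at the cost of an extra cancellation computation.

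There are, however, two concrete gaps in the execution. First, your multiplier carries no decay whatsoever in $\xi_3$, so after Cauchy--Schwarz in the constrained sum the factor $\|\widehat u\|_{L^2(I_{m_3})}$ can only be summed into $\|u\|_{L^2}$. But $\|u\|_{L^2}\lesssim\|u\|_{M^{s,2}_p}$ fails when $2<p<\infty$ and $0\leq s\leq\tfrac12-\tfrac1p$, which lies inside the range you must cover. The paper dodges this by first invoking \eqref{1:81} to dispose of all $1<p<\infty$ with $0\leq s<1-\tfrac1p$, so that the explicit-formula argument only ever runs with $p=1$ or $s\geq1-\tfrac1p$; you cite the remark containing \eqref{1:81} only as motivation, not as a reduction, and without that reduction (or some redistribution of decay onto $\xi_3$ via the constraint, which costs too much) your argument does not close in that corner. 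Second, ``apply Young's convolution inequality'' in $\ell^{p/2}_k$ is not available for $p<2$, where $\ell^{p/2}$ is only a quasi-normed space and $\langle\cdot\rangle^{-1}\notin\ell^{p/2}$; the paper arranges the exponents so that every convolution is performed in $\ell^{9p/8}_k$ or $\ell^{3p/2}_k$, which are always Banach. This second point is the bookkeeping obstacle you yourself flag and is fixable by the same device, but the first point is a genuine logical gap in the written argument as it stands.
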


\begin{proof}
In view of \eqref{1:81}, it suffices to prove the claim for (i) $p=1$ and $0\leq s<\frac12$; and (ii) $1<p<\infty$ and $1-\frac1p\leq s<\frac32-\frac1p$.

Using the explicit representation of the quartic term in the definition of $\alpha$ provided by Lemma~\ref{L:quartic}, together with H\"older and Young's convolution inequality, we may bound
\begin{align*}
\bigl|\beta_4\bigl(\tfrac12; u^k(t)\bigr)\bigr|
&\lesssim \Bigl\langle |\widehat  u(t, \xi)|, \tfrac{ |\widehat u(t, \xi)|}{\langle \xi-k\rangle} *\tfrac{ |\widehat u(t, \xi)|}{\langle \xi-k\rangle}* \tfrac{ |\widehat u(t, \xi)|}{\langle \xi-k\rangle^2}\Bigr\rangle_{L^2_\xi}\\
&\lesssim \|\widehat u(t, \xi)\|_{L^2_\xi} \bigl\| \tfrac{ |\widehat u(t, \xi)|}{\langle \xi-k\rangle}\bigr\|_{L^{\frac43}_\xi}^2\bigl\| \tfrac{ |\widehat u(t, \xi)|}{\langle \xi-k\rangle^2}\bigr\|_{L^1_\xi}\\
&\lesssim \|u(t)\|_{L^2} \bigl\| \tfrac{ |\widehat u(t, \xi)|}{\langle \xi-k\rangle}\bigr\|_{L^{\frac43}_\xi}^3\bigl\| \langle \xi-k\rangle^{-1}\|_{L^4_\xi}\\
&\lesssim \|u(t)\|_{L^2} \bigl\| \tfrac{ |\widehat u(t, \xi)|}{\langle \xi-k\rangle}\bigr\|_{L^{\frac43}_\xi}^3.
\end{align*}

By Lemma \ref{L:embedding} and the fact that $c_k\geq 1$ for all $k\in\mathbb{Z}$, we have 
$$
\|u(t)\|_{L^2}\lesssim \|u(t)\|_{M^{s,2}_p}\leq \bigl\|c_k\langle k\rangle^s\|{\widehat{u}(t,\xi)}\|_{L^2_\xi(I_k)}\bigr\|_{\ell_k^{p}}
$$
whenever $s\geq 0$ and $1\leq p\leq 2$ or $s>\frac12-\frac1p$ if $2<p<\infty$, which includes the range of parameters we are considering. It thus remains to show that
\begin{align}\label{2:34}
\Bigl\|c_k\langle k\rangle^{s}\bigl\| \tfrac{ |\widehat u(t, \xi)|}{\langle \xi-k\rangle}\bigr\|_{L^{\frac43}_\xi}^{\frac32}\Bigr\|_{\ell_k^{p}}\lesssim \bigl\|c_k\langle k\rangle^{s}\|\widehat{u}(t,\xi)\|_{L_\xi^2(I_k)}\bigr\|_{\ell_k^p}^{\frac32}.
\end{align}

Using the properties of the sequence $c_k$ and separating into the cases $2|m|\geq |k|$ and $2|m|<|k|$, we may bound
\begin{align}\label{2:35}
\Bigl\|c_k\langle k\rangle^{s}\bigl\| \tfrac{ |\widehat u(t, \xi)|}{\langle \xi-k\rangle}\bigr\|_{L^{\frac43}_\xi}^{\frac32}\Bigr\|_{\ell_k^{p}}
&\lesssim  \biggl\|c_k\langle k\rangle^s \Bigl\{\sum_{m\in \Z}\langle m-k\rangle^{-\frac43} \|\widehat u(t,\xi)\|_{L^{\frac43}_\xi(I_m)}^{\frac43}\Bigr\}^{\frac98}\biggr\|_{\ell_k^p}\notag\\
&\lesssim  \biggl\|\sum_{m\in \Z} c_k^{\frac89}\langle k\rangle^{\frac{8s}9} \langle m-k\rangle^{-\frac43} \|\widehat u(t)\|_{L^2(I_m)}^{\frac43}\biggr\|_{\ell_k^{\frac{9p}8}}^{\frac98}\notag\\
&\lesssim \biggl\|\sum_{|m|\geq \frac12|k|}\langle m-k\rangle^{-\frac43} c_m^{\frac43}\langle m\rangle^{\frac{4s}3} \|\widehat u(t)\|_{L^2(I_m)}^{\frac43}\biggr\|_{\ell_k^{\frac{9p}8}}^{\frac98}\\
&\quad+ \biggl\|\sum_{|m|< \frac12|k|}c_k^{\frac89}\langle k\rangle^{\frac{8s}9-\frac43}  \|\widehat u(t)\|_{L^2(I_m)}^{\frac43}\biggr\|_{\ell_k^{\frac{9p}8}}^{\frac98}.\notag
\end{align}

To estimate the terms on the right-hand side of \eqref{2:35}, we distinguish two cases:  If $1\leq p\leq \frac43$, we use the embedding $\ell^p\hookrightarrow \ell^{\frac43}$ to bound
\begin{align*}
\biggl\|\sum_{|m|\geq \frac12|k|}&\langle m-k\rangle^{-\frac43} c_m^{\frac43}\langle m\rangle^{\frac{4s}3} \|\widehat u(t)\|_{L^2(I_m)}^{\frac43}\biggr\|_{\ell_k^{\frac{9p}8}}\\
&\lesssim \sum_{m\in \Z} c_m^{\frac43}\langle m\rangle^{\frac{4s}3} \|\widehat u(t)\|_{L^2(I_m)}^{\frac43} \bigl\| \langle m-k\rangle^{-\frac43}\bigr\|_{\ell_k^{\frac{9p}8}}
\lesssim \bigl\|  c_m\langle m\rangle^{s} \|\widehat u(t)\|_{L^2(I_m)} \bigr\|_{\ell_m^p}^{\frac43}
\end{align*}
and
\begin{align*}
\biggl\|\sum_{|m|< \frac12|k|}  c_k^{\frac89}\langle k\rangle^{\frac{8s}9 -\frac43} \|\widehat u(t)\|_{L^2(I_m)}^{\frac43}\biggr\|_{\ell_k^{\frac{9p}8}}
&\lesssim  \sum_{m\in \Z}\|\widehat u(t)\|_{L^2(I_m)}^{\frac43} \bigl\| c_k^{\frac89}\langle k\rangle^{\frac{8s}9 -\frac43}\bigr\|_{\ell_k^{\frac{9p}8}}\\
&\lesssim \sum_{m\in \Z} c_m^{\frac43}\langle m\rangle^{\frac{4s}3} \|\widehat u(t)\|_{L^2(I_m)}^{\frac43}\\
&\lesssim  \bigl\| c_m\langle m\rangle^{s} \|\widehat u(t)\|_{L^2(I_m)} \bigr\|_{\ell_m^p}^{\frac43}.
\end{align*}
To obtain the second inequality in the last display we used that $ c_m^{\frac43}\langle m\rangle^{\frac{4s}3}  \geq 1$ for all $m\in \Z$ together with the fact $c_k^{\frac89}\langle k\rangle^{\frac{8s}9 -\frac43}\in \ell_k^{\frac{9p}8}$ whenever  $s<\frac32-\frac1p$.  This completes the proof of \eqref{2:34} for $1\leq p\leq \frac43$.  

Finally, we consider the contribution of the terms on the  right-hand side of \eqref{2:35} in the case $\frac43<p<\infty$.  Using Young's convolution inequality, we may bound
\begin{align*}
\biggl\|\sum_{|m|\geq \frac12|k|} & \langle m-k\rangle^{-\frac43} c_m^{\frac43}\langle m\rangle^{\frac{4s}3} \|\widehat u(t)\|_{L^2(I_m)}^{\frac43}\biggr\|_{\ell_k^{\frac{9p}8}}\\
&\lesssim \bigl\|\langle m\rangle^{-\frac43}\bigr\|_{\ell_m^{\frac{9p}{9p-4}}}\bigl\| c_m^{\frac43}\langle m\rangle^{\frac{4s}3} \|\widehat u(t)\|_{L^2(I_m)}^{\frac43} \bigr\|_{\ell_m^{\frac{3p}4}} 
\lesssim \bigl\|  c_m\langle m\rangle^{s} \|\widehat u(t)\|_{L^2(I_m)} \bigr\|_{\ell_m^p}^{\frac43},
\end{align*}
which is acceptable.  To control the second term on the  right-hand side of \eqref{2:35}, we employ the H\"older inequality to bound
\begin{align*}
\biggl\|&\sum_{|m|< \frac12|k|} c_k^{\frac89}\langle k\rangle^{\frac{8s}9 -\frac43} \|\widehat u(t)\|_{L^2(I_m)}^{\frac43}\biggr\|_{\ell_k^{\frac{9p}8}}\\
&\lesssim  \biggl\|c_k^{\frac89}\langle k\rangle^{\frac{8s}9 -\frac43} \bigl\|  c_m^{\frac43}\langle m\rangle^{\frac{4s}3} \|\widehat u(t)\|_{L^2(I_m)}^{\frac43}\bigr\|_{\ell_m^{\frac{3p}4}}\cdot  \|\langle m\rangle^{-\frac{4s}3}\|_{\ell_m^{\frac{3p}{3p-4}}(|m|< \frac12|k|)}\biggr\|_{\ell_k^{\frac{9p}8}}\\
&\lesssim  \bigl\|  c_m\langle m\rangle^{s} \|\widehat u(t)\|_{L^2(I_m)} \bigr\|_{\ell_m^p}^{\frac43}  \Bigl\|c_k^{\frac89}\langle k\rangle^{\frac{8s}9 -\frac43}\max\bigl\{ 1, \langle k\rangle^{-\frac{4s}3 +\frac{3p-4}{3p}} \log^{\frac{3p-4}{3p}}(\langle k\rangle)\bigr\} \Bigr\|_{\ell_k^{\frac{9p}8}}\\
&\lesssim  \bigl\|  c_m\langle m\rangle^{s} \|\widehat u(t)\|_{L^2(I_m)} \bigr\|_{\ell_m^p}^{\frac43},
\end{align*}
which is also acceptable.  In the computations above we used the sub-logarithmic growth of the sequence $c_k$ and the condition $0\leq s<\frac32-\frac1p$.  Note also that the term $\log^{\frac{3p-4}{3p}}(\langle k\rangle)$ is only necessary when $\frac{4s}3\cdot\frac{3p}{3p-4}=1$, that is,  when $s=\frac34-\frac1p$.
\end{proof}

Combining \eqref{3:33} with Lemmas~\ref{R6} and \ref{L:beta4 small p}, we arrive at
\begin{align}\label{boot}
\bigl\|c_k\langle k\rangle^s&\|\widehat{u}(t,\xi)\|_{L^2_\xi(I_k)}\bigr\|_{\ell_k^p}\notag\\
& \leq C_0\biggl\{ \bigl\|c_k\langle k\rangle^s\|\widehat{u}(0,\xi)\|_{L^2_\xi(I_k)}\bigr\|_{\ell_k^p} \Bigl[ 1 + \bigl\|c_k\langle k\rangle^s\|\widehat{u}(0,\xi)\|_{L^2_\xi(I_k)}\bigr\|_{\ell_k^p}^2\Bigr]\\
&\qquad\qquad+\bigl\|c_k\langle k\rangle^s\|\widehat{u}(t,\xi)\|_{L^2_\xi(I_k)}\bigr\|_{\ell_k^p}^2 +\bigl\|c_k\langle k\rangle^s\|\widehat{u}(t,\xi)\|_{L^2_\xi(I_k)}\bigr\|_{\ell_k^p}^3\biggr\}\notag
\end{align}
uniformly for $t\in I$, for all $1\leq p<\infty$ and $0\leq s<\frac32-\frac1p$.  Here $C_0>0$ is a fixed constant that incorporates the implicit constants appearing in \eqref{3:33} and Lemmas~\ref{R6} and \ref{L:beta4 small p}.  

We first discuss the question of the a priori bounds \eqref{apriori bds}. Taking $c_k\equiv 1$, recalling the small data assumption \eqref{small norm data}, and choosing $\varepsilon$ sufficiently small depending on $C_0$, a standard bootstrap argument yields
$$
\sup_{t\in I}\, \|u(t)\|_{M^{s,2}_p}\leq \tfrac32 C_0\varepsilon.
$$
Using the time-continuity of the \eqref{mkdv} and \eqref{nls} flows, this allows us to enlarge the interval $I$ on which \eqref{small norm} holds, which in turn implies \eqref{smalltail} on this larger interval.  Another straightforward continuity argument then yields 
\begin{align}\label{a priori bds}
\sup_{t\in \R} \, \|u(t)\|_{M^{s,2}_p}\leq \tfrac32 C_0\varepsilon \quad\text{for all $t\in \R$}.
\end{align}
This proves the desired a priori bounds for solutions to \eqref{mkdv} and \eqref{nls} satisfying the small data condition \eqref{small norm data}.

Now given an arbitrary (large) data $u(0)\in \mathcal S(\R)$, we may use Lemma~\ref{L:scaling} to reduce to the small data case.  Specifically, we may choose
$$
\lambda_0 \simeq \begin{cases}
[1+ \varepsilon^{-1} \|u(0)\|_{M^{s,2}_p}]^p &\text{ if } \ 2\leq p<\infty,\\[2mm]
[1+ \varepsilon^{-1} \|u(0)\|_{M^{s,2}_p}]^2 &\text{ if } \ 1\leq p\leq 2
\end{cases}
$$
to guarantee that the rescaled initial data satisfies $\|u_{\lambda_0}(0)\|_{M^{s,2}_p}\leq \varepsilon$.  The argument above then yields 
$$
\sup_{t\in \R} \|u_{\lambda_0}(t)\|_{M^{s,2}_p}\lesssim  \|u_{\lambda_0}(0)\|_{M^{s,2}_p}.
$$
Applying again Lemma~\ref{L:scaling} with $\lambda = \lambda_0^{-1}$ to reverse the scaling symmetry, we get
\begin{align}\label{a priori}
\sup_{t\in\R}\, \|u(t)\|_{M^{s,2}_p}\lesssim \bigl(1+\|u(0)\|_{M^{s,2}_p}\bigr)^{c(s,p)}\|u(0)\|_{M^{s,2}_p},
\end{align}
where $c(s,p)$ is as in Theorem~\ref{T:main}.

To complete the proof of Theorem~\ref{T:main}, it remains to show that orbits emanating from a set $Q\subset \mathcal S(\R)$, which is bounded and equicontinuous in $M^{s,2}_p(\R)$, form a family that is also bounded and equicontinuous in $M^{s,2}_p(\R)$.  Let
$$
Q^*:=\bigl\{u(t): t\in \R , \, u(0)\in Q, \, u\text{ solves either \eqref{mkdv} or \eqref{nls}}\bigr\} 
$$
denote the set of orbits.  The boundedness of $Q^*$ in $M^{s,2}_p(\R)$ follows from the a priori bounds \eqref{a priori} proven above.  It remains to demonstrate equicontinuity.

Using the scaling symmetries of \eqref{mkdv} and \eqref{nls} together with Lemma~\ref{L:scaling}, we may rescale the data in $Q$ to ensure that the rescaled functions satisfy the small data condition \eqref{small norm data}.  Specifically, taking
$$
\lambda_0 \simeq \begin{cases}
[1+ \varepsilon^{-1} \sup_{u\in Q}\, \|u\|_{M^{s,2}_p}]^p &\text{ if } \  2\leq p<\infty,\\[2mm]
[1+ \varepsilon^{-1} \sup_{u\in Q}\,\|u\|_{M^{s,2}_p}]^2 &\text{ if } \ 1\leq p\leq 2
\end{cases}
$$
we can guarantee that 
$$
\sup_{u\in Q}\, \|u_{\lambda_0}\|_{M^{s,2}_p} \leq \tfrac12\varepsilon.
$$
As noted in subsection~\ref{SS:tss}, scaling preserves the equicontinuity property; in particular, the set $Q_{\lambda_0}:=\{ u_{\lambda_0}:\, u\in Q\}$ is equicontinuous in $M^{s,2}_p(\R)$.  Using the characterization of equicontinuity provided by Proposition~\ref{P:equi} for the family $Q_{\lambda_0}$, we may find a sequence $\{c_k\}_{k\in \Z}$ satisfying properties (i)---(v) listed therein.  In particular,
$$
\sup_{u\in Q_{\lambda_0}}\, \bigl\|c_k\langle k\rangle^s\|\widehat u\|_{L^2(I_k)}\bigr\|_{\ell_k^p}\leq \varepsilon.
$$
Inserting this into \eqref{boot} and employing the same continuity arguments used to establish the a priori bounds above, we arrive at
$$
 \sup_{u\in Q_{\lambda_0}^*}\, \bigl\|c_k\langle k\rangle^s\|\widehat u\|_{L^2(I_k)}\bigr\|_{\ell_k^p}\leq \tfrac32 C_0\varepsilon,
$$
where 
$$
Q_{\lambda_0}^*:=\bigl\{u(t): t\in \R , \, u(0)\in Q_{\lambda_0}, \, u\text{ solves either \eqref{mkdv} or \eqref{nls}}\bigr\}.
$$
By Proposition~\ref{P:equi}, this implies that the set $Q_{\lambda_0}^*$ is equicontinuous in $M^{s,2}_p(\R)$.  Undoing the scaling symmetry (which preserves the equicontinuity property), we derive that the set $Q^*$ is equicontinuous in $M^{s,2}_p(\R)$, as desired.

\section{Global well-posedness in modulation spaces}
\label{well posed}

In this section we prove Theorem~\ref{T:Main}, using crucially the well-posedness  result proved in \cite{HKV}.

Both \eqref{nls} and \eqref{mkdv} have long been known to be globally well-posed for Schwartz initial data $u_0$.  This allows us to define a jointly continuous data-to-solution map
$\Phi:\R\times\mathcal S(\R) \to \mathcal S(\R)$.  The basic question we wish to understand is this: Does this map extend continuously to $M^{s,2}_p(\R)$?   Evidently, any such extension is automatically unique because of the density of Schwartz functions in the space $M^{s,2}_p(\R)$.

The paper \cite{HKV} proved that such a continuous extension $\Phi:\R\times H^\sigma(\R) \to H^\sigma(\R)$ is possible for any $\sigma>-\frac12$ for both the \eqref{mkdv} and \eqref{nls} equations.  Prior to this, \eqref{mkdv} required $\sigma\geq \frac14$ and \eqref{nls} required $\sigma\geq 0$; see \cite{Guo, Kishimoto, MR915266}. We will need the full range $\sigma>-\frac12$ in order to treat $s=0$ and $p$ large, for example.  The crux of the argument in \cite{HKV}  is the following:

\begin{theorem}[\cite{HKV}]\label{T1}
Fix $\sigma>-\tfrac12$ and $T>0$.  For any sequence $u_n(0,x)\in \mathcal S(\R)$ of initial data that is $H^\sigma$-Cauchy, the corresponding sequence of solutions $u_n(t,x)$ to \eqref{mkdv}, or to \eqref{nls}, is Cauchy in $C_t([-T,T],H^\sigma(\R))$.
\end{theorem}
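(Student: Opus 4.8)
The plan is to prove this via the \emph{method of commuting flows}, since at regularities $\sigma$ near $-\tfrac12$ there is no contraction and continuous dependence cannot be obtained by Picard iteration. Because an $H^\sigma$-Cauchy sequence is bounded and precompact, hence equicontinuous, the first step is to establish that such data produce solutions that remain \emph{bounded and equicontinuous} in $H^\sigma$, uniformly for $t\in[-T,T]$. I would obtain this in the spirit of the arguments developed above: the conserved perturbation determinant $\alpha(\varkappa;u)$ of \eqref{alpha def} controls the $H^\sigma$ norm for every $\sigma>-\tfrac12$ (this is where the threshold enters), and conservation (Proposition~\ref{P:alpha conserved}) together with the freedom to vary the spectral parameter $\varkappa$ propagates tightness of the high frequencies. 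In this way one reduces the whole problem to proving convergence of the solution map on a fixed bounded, equicontinuous family $\mathcal Q$ of data.

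Next I would introduce, for each large $\varkappa$, a regularized Hamiltonian $H_\varkappa$ built from the same perturbation determinant, whose flow $\Psi^\varkappa_t$ commutes with the \eqref{mkdv} (resp. \eqref{nls}) flow and enjoys two crucial features. First, $H_\varkappa$ generates a globally well-posed flow on $H^\sigma$ that depends \emph{Lipschitz-continuously} on the data, uniformly over $\mathcal Q$: the associated vector field is given by a geometrically convergent series in $u$ (cf. the convergence exhibited by \eqref{tailbound}) and is therefore tame on bounded equicontinuous sets. Second, and this is the technical heart, the difference between the $H_\varkappa$ vector field and the genuine \eqref{mkdv}/\eqref{nls} vector field tends to zero in $H^\sigma$ as $\varkappa\to\infty$, again \emph{uniformly over} $\mathcal Q$. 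Establishing this last estimate is where equicontinuity is indispensable: the high-frequency portion of the error is controlled by tightness, while the remaining portion is controlled by the regularization. Integrating the difference of the two evolutions via Duhamel and Gr\"onwall then yields
$$
\sup_{u(0)\in\mathcal Q}\ \sup_{|t|\le T}\ \bigl\| \Psi^\varkappa_t u(0)-u(t)\bigr\|_{H^\sigma}\ \xrightarrow[\varkappa\to\infty]{}\ 0.
$$

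With these ingredients the conclusion follows from a three-approximation argument. Given $\epsilon>0$, fix $\varkappa$ so large that both the true solution $u_n$ and its regularized counterpart $\Psi^\varkappa_t u_n(0)$ lie within $\epsilon/3$ of each other for every $n$ and all $|t|\le T$; this is legitimate precisely because the approximation in the displayed limit is uniform over the equicontinuous family $\{u_n(0)\}_n$. For this fixed $\varkappa$, the Lipschitz dependence of $\Psi^\varkappa$ on the data, combined with the hypothesis that $u_n(0)$ is $H^\sigma$-Cauchy, shows that $\{\Psi^\varkappa_t u_n(0)\}_n$ is Cauchy in $C_t([-T,T];H^\sigma)$. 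The triangle inequality then forces $\{u_n\}_n$ itself to be Cauchy in $C_t([-T,T];H^\sigma)$, which is the assertion.

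The main obstacle is the uniform approximation in the second paragraph: one must show the error between the regularized and the true Hamiltonian vector fields is small in $H^\sigma$, with no loss of derivatives and with constants depending only on the size and modulus of equicontinuity of $\mathcal Q$. This cannot be done by brute-force nonlinear estimates at this regularity; it relies essentially on the integrable structure, namely that $H_\varkappa$ and the \eqref{mkdv}/\eqref{nls} Hamiltonians arise from a common Lax operator, so that their difference is organized into manifestly smoothing, $\varkappa$-decaying contributions. Once this structural cancellation is isolated, the remaining estimates are of the same flavor as the a priori and tail bounds established earlier in this paper.
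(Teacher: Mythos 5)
The paper does not prove this statement: Theorem~\ref{T1} is imported verbatim from \cite{HKV} and used as a black box, so there is no in-paper proof to compare yours against. What you have written is a high-level outline of the strategy that \cite{HKV} (following \cite{KV19}) actually uses --- the method of commuting flows, with equicontinuity propagated via the perturbation determinant and a regularized Hamiltonian flow $\Psi^\varkappa_t$ approximating the true flow uniformly on equicontinuous sets, followed by a three-term triangle inequality. As a description of the architecture this is accurate, and your identification of the uniform approximation step as the technical heart is correct.

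However, the proposal is not a proof: every load-bearing step is asserted rather than established. The construction of $H_\varkappa$, the commutativity of its flow with \eqref{mkdv}/\eqref{nls}, the Lipschitz well-posedness of the regularized flow, and above all the uniform convergence $\Psi^\varkappa_t u(0)\to u(t)$ are each substantial theorems in \cite{HKV}. Moreover, the specific mechanism you propose for the key step --- showing the \emph{vector fields} differ by $o(1)$ in $H^\sigma$ and closing via Duhamel and Gr\"onwall --- does not go through at regularities $\sigma$ near $-\tfrac12$: the cubic nonlinearity is not even continuous as a map on $H^\sigma$ there, and a Gr\"onwall argument on the vector-field difference loses derivatives. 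In \cite{HKV} the convergence is proved for the \emph{difference flow} itself, and requires additional a priori inputs (notably local smoothing estimates for the full ensemble of flows) that equicontinuity in $H^\sigma$ alone does not provide and that your sketch does not mention. So the gap is concrete: the ``main obstacle'' you flag is not merely a technical estimate ``of the same flavor as the tail bounds in this paper''; it is the principal content of \cite{HKV}, and the route you indicate for it would fail as stated.
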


Next, we will prove an analogue of this assertion for the spaces $M^{s,2}_p(\R)$; see Theorem~\ref{T:CMX} below.  To achieve this, we will rely on the full strength of Theorem~\ref{T:main} and Proposition~\ref{P:upgrade convg}.  After that, we will complete the proof of Theorem~\ref{T:Main} by demonstrating that Theorem~\ref{T:CMX} guarantees that the data-to-solution map can be uniquely and continuously extended to  $M^{s,2}_p(\R)$.

\begin{theorem}\label{T:CMX}
Fix $1\leq p<\infty$, $0\leq s<\frac32-\frac1p$, and $T>0$.   For any sequence $u_n(0,x)\in \mathcal S(\R)$ of initial data that is $M^{s,2}_p(\R)$-Cauchy, the corresponding sequence of solutions $u_n(t,x)$ to \eqref{mkdv}, or to \eqref{nls}, is Cauchy in $C_t([-T,T],M^{s,2}_p(\R))$.
\end{theorem}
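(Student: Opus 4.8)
The plan is to combine the two deep inputs already available: the equicontinuity of orbits furnished by Theorem~\ref{T:main} and the $H^\sigma$-continuity of the flow furnished by Theorem~\ref{T1}. The mechanism for merging them is a uniform-in-time version of the frequency-splitting argument behind Proposition~\ref{P:upgrade convg}. First I would set $Q:=\{u_n(0):n\geq 1\}\subset\mathcal S(\R)$. Since the data sequence is Cauchy in $M^{s,2}_p(\R)$, it is bounded there and, by Remark~\ref{R:equi}, equicontinuous; thus $Q$ satisfies the hypotheses of Theorem~\ref{T:main}. Applying that theorem, the entire orbit family
$$
Q^*:=\{u_n(t):t\in\R,\ n\geq 1\}
$$
is bounded and equicontinuous in $M^{s,2}_p(\R)$. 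By Definition~\ref{D:equi}, this yields high-frequency tail smallness that is uniform in both $n$ and $t$: given $\eps>0$ there exists $K_\eps$ with
$$
\sup_{n\geq 1}\ \sup_{t\in\R}\ \bigl\|\langle k\rangle^s\|\widehat{u_n}(t)\|_{L^2(I_k)}\bigr\|_{\ell^p(|k|>K_\eps)}<\tfrac\eps4.
$$

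Next I would choose $\sigma=\sigma(s,p)>-\tfrac12$ via Lemma~\ref{L:embedding}, so that $M^{s,2}_p(\R)\hookrightarrow H^\sigma(\R)$ with $\sigma\leq s$. Continuity of this embedding converts the $M^{s,2}_p$-Cauchy hypothesis on the data into an $H^\sigma$-Cauchy statement, whence Theorem~\ref{T1} applies and shows that $\{u_n\}$ is Cauchy in $C_t([-T,T],H^\sigma(\R))$. In particular, $\sup_{|t|\leq T}\|u_n(t)-u_m(t)\|_{H^\sigma}\to 0$ as $n,m\to\infty$.

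Finally I would splice these facts together exactly as in the proof of Proposition~\ref{P:upgrade convg}, but carrying a supremum over $t\in[-T,T]$ through every estimate. The high-frequency difference is controlled by $\tfrac\eps2$ uniformly in $t,n,m$ using the tail bound above, while on $|k|\leq K_\eps$ the embedding $\ell^2\hookrightarrow\ell^p$ (when $p\geq 2$) or H\"older's inequality (when $1\leq p<2$), together with $\sigma\leq s$, gives
$$
\bigl\|\langle k\rangle^s\|\widehat{u_n}(t)-\widehat{u_m}(t)\|_{L^2(I_k)}\bigr\|_{\ell^p(|k|\leq K_\eps)}\lesssim \langle K_\eps\rangle^{s-\sigma+\max\{0,\frac1p-\frac12\}}\,\|u_n(t)-u_m(t)\|_{H^\sigma}.
$$
Because the right-hand side tends to zero uniformly in $|t|\leq T$ by the $C_tH^\sigma$-Cauchy property, choosing $n,m$ large forces $\sup_{|t|\leq T}\|u_n(t)-u_m(t)\|_{M^{s,2}_p}\leq\eps$. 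As $C_t([-T,T],M^{s,2}_p(\R))$ is complete, this proves the asserted Cauchy property.

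The main obstacle is securing the uniformity in $t$ in the frequency splitting: applying Proposition~\ref{P:upgrade convg} separately at each fixed time would yield only pointwise-in-$t$ convergence, not convergence in the $C_t$ norm. This is precisely why the strong form of the equicontinuity statement in Theorem~\ref{T:main}—valid for the whole orbit family $Q^*$ rather than a single orbit—is essential: it renders the cutoff $K_\eps$ independent of both $t$ and $n$, so that the high-frequency error can be dispatched before any appeal to the $H^\sigma$ convergence of the low frequencies.
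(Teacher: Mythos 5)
Your proposal is correct and follows essentially the same route as the paper: embed into $H^\sigma$ via Lemma~\ref{L:embedding}, invoke Theorem~\ref{T1} for $C_tH^\sigma$-convergence, use Theorem~\ref{T:main} to get equicontinuity of the full orbit family, and then run the frequency-splitting upgrade uniformly in $t$. The paper simply cites Proposition~\ref{P:upgrade convg} for this last step, whereas you spell out the uniform-in-$t$ version of its proof; the content is identical.
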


\begin{proof}
Let $\sigma=\sigma(p,s)>-\tfrac12$ be chosen satisfying \eqref{262}.  Lemma~\ref{L:embedding} then guarantees that $M^{s,2}_p(\R)\hookrightarrow H^\sigma(\R)$ and consequently, $u_n(0,x)$ is  $H^\sigma$-Cauchy.  Applying Theorem~\ref{T1}, we deduce that the corresponding solutions converge to some $u$ in $C_t([-T,T],H^\sigma(\R))$.

As $u_n(0)$ converge in $M^{s,2}_p(\R)$, we know that the set $\{ u_n(0) \}$ is not only $M^{s,2}_p(\R)$-bounded, but also that it is equicontinuous (as noted in Remark~\ref{R:equi}).  In view of Theorem~\ref{T:main}, this implies that the set of orbits $\{u_n(t):\, t\in \R, \, n\geq 1\}$ is bounded and equicontinuous in $M^{s,2}_p(\R)$.

Proposition~\ref{P:upgrade convg} then allows us to upgrade the convergence of $u_n$ in the space  $C_t([-T,T],H^\sigma(\R))$ to convergence in $C_t([-T,T],M^{s,2}_p(\R))$.
\end{proof}

We are now in a position to complete the

\begin{proof}[Proof of Theorem~\ref{T:Main}]
Given initial data $u_0\in M^{s,2}_p(\R)$, we define $\Phi(t,u_0)$ as follows:  Let $u_n(0)$ be a sequence of Schwartz functions that converges to $u_0$ in $M^{s,2}_p(\R)$.  By Theorem~\ref{T:CMX}, for any $T>0$, the sequence of solutions $u_n(t)$ converges in $C_t([-T,T], M^{s,2}_p(\R))$.  Consequently, we may define
\begin{equation*}
\Phi(t,u_0) := \lim_{n\to\infty} u_n(t) .
\end{equation*}
Theorem~\ref{T:CMX} also guarantees that this limit is independent of the choice of sequence $u_n(0)$.
	
It remains to show that $\Phi$ is jointly continuous.  We achieve this in two steps.  First, we demonstrate continuity in time.  Specifically, given $u_0\in M^{s,2}_p(\R)$ and a sequence $t_n\to t_0\in \R$, we claim that
\begin{align}\label{cont in time}
\lim_{n\to \infty}\| \Phi(t_n,u_0)-\Phi(t_0,u_0)\|_{M^{s,2}_p}=0.
\end{align}
This follows from the continuity in time of Schwartz solutions.  Indeed, setting $T= \sup_{n\geq 1} |t_n|$ and $\varepsilon >0$, by the definition of $\Phi$ there exists a Schwartz solution $\widetilde u$ so that
$$
\sup_{|t|\leq T}\, \| \Phi(t,u_0)-\widetilde u(t)\|_{M^{s,2}_p}\leq \varepsilon.
$$
By the triangle inequality, we may then bound
\begin{align*}
\|&\Phi(t_n,u_0) -\Phi(t_0,u_0)\|_{M^{s,2}_p} \\
&\leq  \| \Phi(t_n,u_0)-\widetilde u(t_n)\|_{M^{s,2}_p}+ \| \Phi(t_0,u_0)-\widetilde u(t_0)\|_{M^{s,2}_p} + \|\widetilde u(t_n)-\widetilde u(t_0)\|_{M^{s,2}_p} \\
&\leq 2\sup_{|t|\leq T} \, \| \Phi(t,u_0)-\widetilde u(t)\|_{M^{s,2}_p} +  \|\widetilde u(t_n)-\widetilde u(t_0)\|_{M^{s,2}_p} \\
&\leq 2\varepsilon + \|\widetilde u(t_n)-\widetilde u(t_0)\|_{M^{s,2}_p}.
\end{align*}
Letting $n\to \infty$ and recalling that $\varepsilon>0$ was arbitrary, we deduce \eqref{cont in time}.

Turning to joint continuity, we consider a sequence of initial data $u_{0,n} \in M^{s,2}_p(\R)$ that converges to $u_0$ in $M^{s,2}_p(\R)$ and a sequence of times $t_n\to t_0\in \R$.  Let $T= \sup_{n\geq 1} |t_n|$.  By the definition of $\Phi$, we may choose another sequence $\widetilde u_n (t)$ of Schwartz solutions to \eqref{mkdv}/\eqref{nls} such that
\begin{equation}\label{11:47}
\lim_{n\to \infty}\,\sup_{|t|\leq T}  \, \|\Phi(t,u_{0,n}) -\widetilde u_n (t) \|_{M^{s,2}_p}=0.
\end{equation}
In particular, $\widetilde u_n (0) \to u_0$ in $M^{s,2}_p(\R)$, and so Theorem~\ref{T:CMX} yields
\begin{equation}\label{11:48}
\lim_{n\to \infty}\,\sup_{|t|\leq T}  \, \| \widetilde u_n (t) - \Phi(t,u_0) \|_{M^{s,2}_p} = 0.
\end{equation}
By the triangle inequality, we may then bound
\begin{align*}
\|\Phi(t_n,u_{0,n})& -\Phi(t_0,u_0)\|_{M^{s,2}_p} \\
&\leq  \| \Phi(t_n,u_{0,n})-\widetilde u_n(t_n)\|_{M^{s,2}_p}+ \| \widetilde u_n(t_n)-\Phi(t_n,u_0)\|_{M^{s,2}_p} \\
&\quad+\| \Phi(t_n,u_0)-\Phi(t_0,u_0)\|_{M^{s,2}_p}\\
&\leq \sup_{|t|\leq T}  \, \|\Phi(t,u_{0,n}) -\widetilde u_n (t) \|_{M^{s,2}_p} +\sup_{|t|\leq T}  \, \| \widetilde u_n (t) - \Phi(t,u_0) \|_{M^{s,2}_p} \\
&\quad +\| \Phi(t_n,u_0)-\Phi(t_0,u_0)\|_{M^{s,2}_p}.
\end{align*}
The right-hand side above converges to zero as $n\to\infty$ by  \eqref{11:47}, \eqref{11:48}, and  \eqref{cont in time}; this demonstrates that $\Phi$ is jointly continuous.
\end{proof}

\end{document}